\ifpdf\usepackage[pdftex]{hyperref}
\else\usepackage[hypertex]{hyperref}\fi
\theoremstyle{plain}
\newtheorem{thm}{Theorem}[section]
\newtheorem{prop}[thm]{Proposition}
\newtheorem{cor}[thm]{Corollary}
\theoremstyle{definition}
\newtheorem{defn}[thm]{Definition}
\theoremstyle{remark}
\newtheorem{rem}[thm]{Remark}
\theoremstyle{remark}
\DeclareMathOperator{\supess}{supess}
\DeclareMathOperator{\supp}{supp}
\let\dsp=\displaystyle 
\def\R{\mathbb R}
\def\N{\mathbb N}
\def\Q{\mathbb Q}
\begin{document}

\title[Distribution-energy inequalities and entropies]{Balanced\\
  distribution-energy inequalities \\ and related entropy bounds }

\author{Michel Rumin}
\address{Laboratoire de Mathématiques d'Orsay\\
  UMR 8628 \\ CNRS et Université Paris-Sud\\
  91405 Orsay\\ France}

\email{michel.rumin@math.u-psud.fr}

\date{\today}

\begin{abstract} Let $A$ be a self-adjoint operator acting over a
  space $X$ endowed with a partition.  We give lower bounds on the
  energy of a mixed state $\rho$ from its distribution in the
  partition and the spectral density of $A$. These bounds improve with
  the refinement of the partition, and generalize inequalities by
  Li-Yau and Lieb--Thirring for the Laplacian in $\R^n$. They imply an
  uncertainty principle, giving a lower bound on the sum of the
  spatial entropy of $\rho$, as seen from $X$, and some spectral
  entropy, with respect to its energy distribution. On $\R^n$, this
  yields lower bounds on the sum of the entropy of the densities of
  $\rho$ and its Fourier transform.  A general log-Sobolev inequality
  is also shown. It holds on mixed states, without Markovian or
  positivity assumption on $A$.
\end{abstract}

\keywords{Lieb--Thirring inequality, entropy, uncertainty principle,
  log-Sobolev}

\subjclass[2000]{58J50, 47B06, 46E35, 35P20, 94A17.}


\maketitle


\section{Introduction and main results}
\label{sec:introduction}

Let $(X, \mu)$ be a $\sigma$-finite measure space, $V$ a separable
Hilbert space and $A$ a self-adjoint operator acting on
$$
\mathcal{H} = L^2(X,V) = L^2(X,\mu) \otimes V\,.
$$ 
The inequalities we will consider concern \emph{mixed states}, that is
positive trace class operators on $\mathcal{H}$. From the
quantum-mechanical viewpoint, they are positive linear combination of
\emph{pure states}, which are the orthogonal projections on functions
in $\mathcal{H}$; see \cite[\S23]{von-Neumann} or
\cite{Wikipedia}. More precisely, as in \cite{Rumin10}, we are looking
for integral controls on the density of a state $\rho$ from its energy
given by the trace
$$
\mathcal{E}(\rho) = \tau (\rho A)\,.
$$ 

The density function of the state, or more generally of a bounded
positive operator $P$ on $\mathcal{H}$, is a notion that extends the
restriction to the diagonal of $X$ of the $V$-trace of the kernel of
$P$. It may be defined as follows (see e.g. \cite[\S1.2]{Rumin10}):
given a measurable set $\Omega \subset X$, the trace
\begin{equation}
  \label{eq:1}
  \nu_P(\Omega) = \tau (\chi_\Omega P \chi_\Omega) = \tau(P^{1/2}
  \chi_\Omega P^{1/2})  
\end{equation}
defines a measure on $X$. For any Hilbert basis $(e_i)$ of
$\mathcal{H}$, it holds that
\begin{equation}
  \label{eq:2}
  \nu_P(\Omega) = \int_\Omega D\nu_P(x) d\mu(x) \quad \mathrm{where} \quad 
  D\nu_P(x) = \sum_i \|(P^{1/2}e_i)(x)\|_V^2
\end{equation}
is called the \emph{density function} of $P$. For instance, in the
case of a pure state $P= \pi_f$ with $\|f\|_\mathcal{H} = 1$, one has
$D \nu_P (x) = \|f(x)\|_V^2$. Also, when $V$ is finite dimensional, as
for operators acting on scalar valued functions, it turns out that $D
\nu_P$ is bounded if and only if $P$ is ultracontractive from $L^1(X)
$ to $L^\infty(X)$ with
\begin{equation}
  \label{eq:3}
  \|P\|_{1,\infty} \leq D(P)= \supess D\nu_P(x) \leq (\dim V)
  \|P\|_{1,\infty} \,,
\end{equation}
see e.g. \cite[Prop.~1.4]{Rumin10}.

The inequalities studied here rely on the knowledge of the spectral
measure associated to $A$. It is defined as follows.
\begin{defn}
  Let $A$ be a self-adjoint operator on $\mathcal{H}$ and consider the
  spectral projections $\Pi_\lambda= \Pi_{]-\infty,\lambda[}(A)$. We
  define the \emph{spectral measure} of a measurable set $\Omega
  \subset X$ by
  \begin{equation}
    \label{eq:4}
    F_\Omega (\lambda) = \nu_{\Pi_\lambda}(\Omega) = \tau (\Pi_\lambda
    \chi_\Omega \Pi_\lambda) \,,
  \end{equation}
  and the \emph{spectral density function} by
  \begin{equation}
    \label{eq:5}
    F_x(\lambda) = D \nu_{\Pi_\lambda}(x)\,.
  \end{equation}
\end{defn}

These functions are positive increasing (in the large sense) and left
continuous. In the sequel, if $\varphi : \R \rightarrow \R^+$ is an
increasing function, and $y \geq0$, we will set
\begin{displaymath}
  \varphi^{-1}(y) = \sup \{x\in \R \mid \varphi(x) \leq y\} \in
  [-\infty, +\infty]\,.
\end{displaymath}
It is a pseudo-inverse of $\varphi$, and right continuous when finite.

\subsection{Energy of a confined state and spectral bounds}
\label{sec:energy-conf-state}
Our first purpose is to give an inequality between the trace of a
state supported in a domain $\Omega$ and its energy.

\begin{thm}
  \label{thm:CLR}
  Let $A$ be a self-adjoint operator acting on $\mathcal{H}=
  L^2(X,V)$, and let $\rho$ be a non-zero state (positive trace class
  operator) supported in a set $\Omega \subset X$. Suppose that
  \begin{displaymath}
    \mathcal{E}^+(\rho) = \tau(\rho^{1/2} \max(A,0) \rho^{1/2})\
    \mathrm{is\ finite.}
  \end{displaymath}
  Then the integral involved below has a finite positive part and it
  holds that
  \begin{equation}
    \label{eq:6}
    \|\rho\|_\infty \varphi_\Omega\Bigl(\frac{\tau(\rho)}{\|\rho\|_\infty} \Bigr) 
    \leq \mathcal{E}(\rho)\,,
  \end{equation}
  where $\dsp \varphi_\Omega(y) = \int_0^y F_\Omega^{-1}(u) du$ and
  $\|\rho\|_\infty$ denotes the $L^2-L^2$ norm of $\rho$.
\end{thm}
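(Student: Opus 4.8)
The plan is to collapse the operator inequality into a one-dimensional comparison of monotone functions. Put $\beta=\|\rho\|_\infty>0$ and $T=\tau(\rho)>0$, and introduce the increasing, left-continuous function
\begin{displaymath}
  m(\lambda)=\tau(\rho^{1/2}\Pi_\lambda\rho^{1/2}),
\end{displaymath}
which runs from $0$ at $\lambda=-\infty$ to $T$ at $\lambda=+\infty$, and whose Stieltjes measure gives $\int_\R\lambda\,dm(\lambda)=\mathcal E(\rho)$ and $\int_0^{+\infty}\lambda\,dm(\lambda)=\mathcal E^+(\rho)$. The first step bounds $m$ by the spectral measure of $\Omega$. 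Since $\rho$ is supported in $\Omega$ one has $\chi_\Omega\rho=\rho\chi_\Omega=\rho$, hence $\chi_\Omega\rho^{1/2}=\rho^{1/2}\chi_\Omega=\rho^{1/2}$ (approximate $\sqrt{\,\cdot\,}$ on $[0,\beta]$ by polynomials vanishing at $0$). Thus $m(\lambda)=\tau(\rho^{1/2}B_\lambda\rho^{1/2})$ with $B_\lambda=\chi_\Omega\Pi_\lambda\chi_\Omega$ positive, $B_\lambda\leq\id$, and $\tau(B_\lambda)=F_\Omega(\lambda)$. Using $\rho^{1/2}B_\lambda\rho^{1/2}\leq\rho$ on one hand, and $\tau(\rho^{1/2}B_\lambda\rho^{1/2})=\tau(B_\lambda^{1/2}\rho B_\lambda^{1/2})\leq\beta\,\tau(B_\lambda)$ (from $\rho\leq\beta\,\id$) on the other, we obtain the pointwise control
\begin{displaymath}
  m(\lambda)\leq\min\bigl(T,\ \beta F_\Omega(\lambda)\bigr),\qquad\lambda\in\R .
\end{displaymath}

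Next I would rewrite the energy through the pseudo-inverse of $m$. As $\mathcal E^+(\rho)=\int_0^{+\infty}(T-m(\lambda))\,d\lambda$ is finite, integration by parts kills the boundary terms and, identifying $\int_{-\infty}^{0}m(\lambda)\,d\lambda$ with the negative part $\mathcal E^-(\rho)=\tau(\rho^{1/2}\max(-A,0)\rho^{1/2})\in[0,+\infty]$ of the energy, gives
\begin{displaymath}
  \mathcal E(\rho)=\int_\R\lambda\,dm(\lambda)=\int_0^{+\infty}(T-m(\lambda))\,d\lambda-\int_{-\infty}^{0}m(\lambda)\,d\lambda=\int_0^{T}m^{-1}(t)\,dt ,
\end{displaymath}
the last equality being the layer-cake (Fubini) identity, with value $-\infty$ when $\mathcal E^-(\rho)=+\infty$. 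The pointwise bound $m\leq\beta F_\Omega$ reverses for pseudo-inverses: $m^{-1}(t)\geq(\beta F_\Omega)^{-1}(t)=F_\Omega^{-1}(t/\beta)$ for all $t\geq0$, since $\{\lambda:\beta F_\Omega(\lambda)\leq t\}\subset\{\lambda:m(\lambda)\leq t\}$. Hence, with the substitution $u=t/\beta$,
\begin{displaymath}
  \mathcal E(\rho)=\int_0^{T}m^{-1}(t)\,dt\ \geq\ \int_0^{T}F_\Omega^{-1}(t/\beta)\,dt=\beta\int_0^{T/\beta}F_\Omega^{-1}(u)\,du=\beta\,\varphi_\Omega(T/\beta) ,
\end{displaymath}
which is \eqref{eq:6}; and the same bound $F_\Omega^{-1}(t/\beta)\leq m^{-1}(t)$ gives that the positive part of $\varphi_\Omega(T/\beta)$ is at most $\int_0^{T}\max(m^{-1}(t),0)\,dt=\mathcal E^+(\rho)<\infty$, which is the finiteness assertion.

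The operator facts invoked — cyclicity of the trace on the products at hand, the monotonicity $\rho^{1/2}B\rho^{1/2}\leq\|B\|\,\rho$ and $B^{1/2}\rho B^{1/2}\leq\|\rho\|\,B$, and the spectral identity $\mathcal E^+(\rho)=\int_0^\infty\lambda\,dm(\lambda)$ — are routine, to be dispatched in a line each. I expect the only delicate point to be the bookkeeping for the part of $A$ not bounded below: justifying the integration by parts and the layer-cake identity when $\int_{-\infty}^{0}m(\lambda)\,d\lambda$ diverges (so that $\mathcal E(\rho)=-\infty$ and \eqref{eq:6} holds trivially on the right as well), and keeping the left/right-continuity conventions for $m$, $F_\Omega$ and their pseudo-inverses mutually consistent so that the displayed equalities survive the countably many jump points, which do not affect the integrals. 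An alternative to the layer-cake step, perhaps cleaner for these convergence matters, is to integrate against $dm$ the Young inequality $\lambda y\leq\varphi_\Omega(y)+\psi_\Omega(\lambda)$ attached to the convex conjugate pair $\varphi_\Omega$, $\psi_\Omega(\lambda)=\int_0^\lambda F_\Omega$; but the pseudo-inverse comparison above is the most direct route.
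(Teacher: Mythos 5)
Your argument is correct, and its core coincides with the paper's: the bound $m(\lambda)=\tau(\rho^{1/2}\Pi_\lambda\rho^{1/2})\le\|\rho\|_\infty F_\Omega(\lambda)$ is exactly the inequality $\tau(\Pi_\lambda\rho\Pi_\lambda)\le\|\rho\|_\infty\tau(\Pi_\lambda\chi_\Omega\Pi_\lambda)$ that drives the paper's proof, and your layer-cake identity plays the role of the paper's Fubini computation in \eqref{eq:26}. Where you genuinely diverge is in the treatment of operators that are not bounded below: the paper first proves \eqref{eq:6} for positive $A$, integrating $\tau(\rho)-F_\Omega(\lambda)$ over $0\le\lambda\le F_\Omega^{-1}(\tau(\rho))$, and then reaches the general case through the cut-offs $A_k=A\Pi_{\ge k}$, the invariance of the balanced inequality under energy shifts, and monotone/dominated convergence as $k\searrow-\infty$. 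You instead encode the whole energy as $\int_0^{\tau(\rho)}m^{-1}(t)\,dt$ and observe that the pointwise bound $m\le\|\rho\|_\infty F_\Omega$ reverses under pseudo-inversion, which settles positive and sign-indefinite $A$ in one stroke and also yields the finiteness of the positive part of $\varphi_\Omega$ directly from $\mathcal{E}^+(\rho)<\infty$. The price is the bookkeeping you flag, namely making the identity $\int_\R\lambda\,dm=\int_0^{+\infty}(T-m)\,d\lambda-\int_{-\infty}^0 m\,d\lambda=\int_0^{T}m^{-1}(t)\,dt$ valid in $[-\infty,+\infty)$ and insensitive to the countable jump set; this does go through, since the positive parts are controlled by $\mathcal{E}^+(\rho)$, and in the divergent case both sides of \eqref{eq:6} are $-\infty$ precisely because $F_\Omega^{-1}(t/\beta)\le m^{-1}(t)$ pointwise. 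The benefit is that you avoid the truncation-and-limit step entirely, while the paper's route has the side advantage of making the energy-shift (balanced) invariance explicit, a structural point it reuses for the later inequalities.
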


This sharpens and extends Theorem~1.7 in \cite{Rumin10}, restricted
there to positive operators.  When applied to projections onto
$N$-dimensional spaces $L$ of functions supported in $\Omega$,
Theorem~\ref{thm:CLR} gives a lower bound on the sum of the $N$-first
Dirichlet eigenvalues of $\mathcal{E}$ in $\Omega$, namely
\begin{equation}
  \label{eq:7}
  \varphi_\Omega(N) \leq \sum_{k=1}^N \lambda_k(\Omega) \leq
  \mathcal{E}(\Pi_L)\,. 
\end{equation}
Here the Dirichlet spectrum is defined using the min-max principle
\begin{displaymath}
  \lambda_n(\Omega) = \inf_{L \in \mathcal{L}_n }\max_{f
    \in L } (\mathcal{E}(f)/ \|f\|_2^2 ) \quad \mathrm{with}\quad
  \mathcal{L}_n = \{\supp L 
  \subset \Omega \mid \dim L = n\} \,.
\end{displaymath}

As we shall see in \S\ref{sec:illustration-rn}, this leads in the case
of the Laplacian in $\R^n$ to inequalities due to Berezin and Li-Yau
(\cite{Li-Yau} or \cite[Thm.~12.3]{Lieb-Loss}); and which are known to
be sharp in the semiclassical limit, i.e. when $N$ goes to $\infty$.
Also \eqref{eq:7} provides the following controls of the whole
Dirichlet spectral distribution in $\Omega$.

\begin{cor}
  \label{cor:CLR}
  Let $A$ and $\Omega$ as above, and let
  \begin{displaymath}
    N_\Omega(\lambda) = \sup \{\dim V \mid \mathrm{supp}\, V
    \subset \Omega \ \mathrm{and}\ \mathcal{E} \leq \lambda \
    \mathrm{on}\ V\}
  \end{displaymath}
  denotes the Dirichlet spectral distribution function of $A$ in
  $\Omega$. Then one has
  \begin{equation}
    \label{eq:8}
    \varphi_\Omega(N_\Omega(\lambda))  \leq \lambda N_\Omega(\lambda)
    \,.
  \end{equation}
  If moreover $A$ is positive, then
  \begin{equation}
    \label{eq:9}
    N_\Omega(\lambda) \leq 2 F_\Omega(2\lambda)\,.
  \end{equation}
\end{cor}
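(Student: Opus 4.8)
The plan is to obtain \eqref{eq:8} as an almost immediate consequence of \eqref{eq:7}, and \eqref{eq:9} from a Chebyshev truncation of the spectral measure of $A$ followed by a trace-monotonicity estimate. For \eqref{eq:8}, fix $\lambda$ and first suppose $N = N_\Omega(\lambda)$ is finite. The set of dimensions of subspaces $L$ with $\supp L \subset \Omega$ and $\mathcal{E}\le\lambda$ on $L$ is downward closed, since any subspace of such an $L$ again qualifies; hence it equals $\{0,1,\dots,N\}$, and in particular there is a subspace $L$ with $\dim L = N$. By the min-max formula this forces $\lambda_k(\Omega)\le\lambda$ for $k=1,\dots,N$, so $\sum_{k=1}^N\lambda_k(\Omega)\le\lambda N$; plugging this into the left-hand inequality of \eqref{eq:7} gives $\varphi_\Omega(N)\le\lambda N$, which is \eqref{eq:8}. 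When $N_\Omega(\lambda)=+\infty$ one applies the finite case to admissible subspaces of unbounded dimension and lets the dimension tend to $+\infty$.

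For \eqref{eq:9}, assume $A\ge0$; we may take $\lambda>0$, since for $\lambda<0$ there is no admissible subspace and $N_\Omega(\lambda)=0$. Let $W$ be a subspace with $\supp W\subset\Omega$, $\mathcal{E}\le\lambda$ on $W$ and $\dim W = N<\infty$, and write $\Pi=\Pi_{2\lambda}$. For $f\in W$, the spectral theorem for $A\ge0$ and Chebyshev's inequality give $\lambda\|f\|_2^2\ge\mathcal{E}(f)\ge 2\lambda\,\|(I-\Pi)f\|_2^2$, hence $\|\Pi f\|_2^2\ge\tfrac12\|f\|_2^2$. Since $\chi_\Omega f = f$ for $f\in W$, this reads $\langle \chi_\Omega\Pi\chi_\Omega f,f\rangle=\|\Pi f\|_2^2\ge\tfrac12\|f\|_2^2$ for all $f\in W$. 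The operator $B=\chi_\Omega\Pi\chi_\Omega=(\Pi\chi_\Omega)^*(\Pi\chi_\Omega)$ being positive, evaluating $\tau(B)=\sum_i\langle Be_i,e_i\rangle$ in an orthonormal basis $(e_i)$ whose first $N$ vectors span $W$ and discarding the remaining nonnegative terms yields $\tau(B)\ge N/2$. But $\tau(B)=\nu_{\Pi_{2\lambda}}(\Omega)=F_\Omega(2\lambda)$ by \eqref{eq:1} and \eqref{eq:4}, so $N\le 2F_\Omega(2\lambda)$; taking the supremum over admissible $W$ gives \eqref{eq:9}.

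The only real subtleties I foresee are bookkeeping ones: for \eqref{eq:8}, checking that the extremal admissible subspace lies in the form domain of $A$ so that \eqref{eq:7} (hence Theorem~\ref{thm:CLR}) genuinely applies with finite $\mathcal{E}^+$, and dealing with the degenerate cases where $\varphi_\Omega$ or the eigenvalue sum is $\pm\infty$; for \eqref{eq:9}, making the spectral truncation at $2\lambda$ precise, including the endpoint convention and the kernel of $A$. The substance lies entirely in \eqref{eq:7} for the first bound, and for the second in the two elementary steps — Chebyshev's inequality, which is exactly what pins the cut-off at $2\lambda$ and produces the constant $2$, and the fact that compressing a positive operator to a subspace cannot increase its trace — so I do not anticipate a genuine obstacle.
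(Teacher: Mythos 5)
Your argument for \eqref{eq:8} is essentially the paper's: it is read off from \eqref{eq:7} (i.e.\ from Theorem~\ref{thm:CLR} applied to the projection on an admissible $N$-dimensional subspace), and your bookkeeping about the extremal subspace and the min-max values is harmless. For \eqref{eq:9}, however, you take a genuinely different route. The paper deduces it from the balanced bound \eqref{eq:8} itself, via the elementary estimate $\varphi_\Omega(y)\geq(1-c)\,y\,F_\Omega^{-1}(cy)$ for $c\in[0,1]$, which with $c=1/2$ turns \eqref{eq:8} into $N_\Omega(\lambda)\leq\frac1c F_\Omega\bigl(\frac{\lambda}{1-c}\bigr)=2F_\Omega(2\lambda)$; this exhibits \eqref{eq:9} as a direct corollary of the confined-state inequality and gives the whole one-parameter family of bounds at once. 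You instead prove \eqref{eq:9} directly: Chebyshev on the spectral measure gives $\|\Pi_{2\lambda}f\|_2^2\geq\frac12\|f\|_2^2$ for every $f$ in an admissible subspace $W$, and then the trace of the positive operator $\chi_\Omega\Pi_{2\lambda}\chi_\Omega$ computed in a basis adapted to $W$ yields $F_\Omega(2\lambda)\geq\frac12\dim W$. This is correct (for $\lambda>0$, which is the only nontrivial range; the case $\lambda=0$ is degenerate in both proofs), it bypasses Theorem~\ref{thm:CLR} entirely for this half, and it even lands exactly on the left-continuous quantity $F_\Omega(2\lambda)$ with the convention $\Pi_{2\lambda}=\Pi_{]-\infty,2\lambda[}$, whereas the pseudo-inverse manipulation in the paper's route, read strictly, produces the right limit at $2\lambda$. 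Your argument also parametrizes just as well (cutting at $\lambda/(1-c)$ gives $N_\Omega(\lambda)\leq\frac1c F_\Omega(\lambda/(1-c))$), so the trade-off is simply: the paper's proof keeps \eqref{eq:9} tied to the balanced inequality \eqref{eq:8}, while yours is more elementary and self-contained.
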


Hence in the positive case, the \emph{confined} spectral distribution
in $\Omega$ is controlled by twice the \emph{free} spectral density
seen from $\Omega$ at twice energy level, \emph{i.e.} by
$F_\Omega(2\lambda) = \tau (\chi_\Omega \Pi_{2\lambda} )$. Indeed
there, $\Pi_{2\lambda}$ is the free (or unconstrained) spectral space
of $A$ on the whole $X$.


\smallskip

One feature of the sharpness of inequalities like \eqref{eq:6} or
\eqref{eq:8}, that will be used in their proofs, lies in the fact they
stay equivalent under an \emph{energy shift} of $A$ in $A+ k$. Indeed,
one has then
\begin{displaymath}
  F_\Omega(\lambda) \rightarrow F_\Omega(\lambda-k) \quad
  \mathrm{thus} \quad 
  F_\Omega^{-1} \rightarrow F_\Omega^{-1} + k\quad \mathrm{and}\quad
  \varphi_\Omega(y) 
  \rightarrow \varphi_\Omega(y) + ky \,.
\end{displaymath}
Hence both sides of \eqref{eq:6} shift by $k \tau(\rho)$, while
\eqref{eq:8} stays unchanged up to a parameter shift. This implies in
particular that one can't improve \eqref{eq:6} or \eqref{eq:8} by a
fixed multiplicative factor for any (even positive) operator and
state. Indeed, suppose that for any positive operator and state it
holds
\begin{displaymath}
  (1+ \varepsilon) \|\rho\|_\infty \varphi_\Omega (\tau(\rho)
  /\|\rho\|_\infty) \leq \mathcal{E}_A(\rho) \,. 
\end{displaymath}
Then one would get by a positive energy shift $A \rightarrow A+k$ that
\begin{displaymath}
  0\leq (1+\varepsilon) \|\rho\|_\infty \varphi_\Omega (\tau(\rho)
  /\|\rho\|_\infty) \leq \mathcal{E}_{A- k\varepsilon} (\rho) < 0
\end{displaymath}
for $k$ large enough. Of course another stronger inequality than
\eqref{eq:6} may hold however.

In the sequel, we shall say that an inequality is \emph{balanced} if,
like \eqref{eq:6} or \eqref{eq:8}, it stays equivalent through energy
shift. None of the inequalities given in \cite{Rumin10} is balanced;
that precludes them to hold for operators of indefinite sign.

\subsection{A balanced Lieb-Thirring inequality}
\label{sec:balanc-lieb-thirr}

We now state a version of \eqref{eq:6}, that gives lower bounds on
$\mathcal{E}(\rho)$ knowing the distribution of the state in a
partition of $X $ into $\sqcup_i \Omega_i$, i.e. given $\nu_\rho
(\Omega_i) = \tau(\chi_{\Omega_i} \rho \chi_{\Omega_i})$.

\begin{thm}
  \label{thm:Lieb-Thirring}
  Let $A$ be a self-adjoint operator on $\mathcal{H}= L^2(X,V)$, and
  $\rho$ a non-zero state such that $\mathcal{E}^+(\rho)$ is
  finite. Let $\Omega_i$ be a measurable partition of $X$.

  $\bullet$ Then the sums and integral involved below have a finite
  positive part, and it holds that
  \begin{equation}
    \label{eq:10}
    H_{\Omega_i}(\rho) = \|\rho\|_\infty
    \sum_{i}
    \psi_{\Omega_i} \Bigl(\frac{\nu_\rho(\Omega_i)}{\|\rho\|_\infty}
    \Bigr) \leq H(\rho)=  
    \| \rho\|_\infty \int_X
    \psi_x\Bigl( \frac{D
      \nu_\rho(x)}{\|\rho\|_\infty } \Bigr) d\mu(x) \leq 
    \mathcal{E}(\rho) \,,
  \end{equation}
  where
  \begin{equation}
    \label{eq:11}
    \psi_{\Omega_i} (y)= \int_0^1 \varphi_{\Omega_i,t}(y) dt \quad
    \mathrm{with} \quad \varphi_{\Omega_i,t} (y)= \int_0^y
    F_{\Omega}^{-1}(t^2 u) du \,,
  \end{equation}
  and similarly
  \begin{equation}
    \label{eq:12}
    \psi_x (y) = \int_0^1 \varphi_{x,t} (y) dt \quad
    \mathrm{with} \quad
    \varphi_{x,t} (y) = \int_0^y   F_x^{-1}(t^2 u) du\,.
  \end{equation}

  $\bullet$ Moreover if $\Omega'_i$ is a finer partition of $X$ than
  $\Omega_i$, then $ H_{\Omega_i}(\rho) \leq H_{\Omega'_i}(\rho)$.
\end{thm}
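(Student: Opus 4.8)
The plan is, after the scaling normalisation $\|\rho\|_\infty=1$ (each of $H_{\Omega_i}(\rho)$, $H(\rho)$, $\mathcal{E}(\rho)$ is positively homogeneous of degree one in $\rho$, so we may assume $0\le\rho\le\mathrm{Id}$), to prove the three assertions $H(\rho)\le\mathcal{E}(\rho)$, $H_{\Omega_i}(\rho)\le H(\rho)$, and the refinement monotonicity separately; the first is the analytic core, the last two are convexity statements. The unifying device is a rewriting of the weight functions. Set $\phi(b,c)=b-(\sqrt b-\sqrt c)_+^2=\inf_{0<t\le1}(tb+(t^{-1}-1)c)$ and $g(b,c)=b-\phi(b,c)=(\sqrt b-\sqrt c)_+^2$ on $\R_+^2$. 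Using $\varphi_{\Omega,t}(y)=t^{-2}\varphi_\Omega(t^2y)$, the layer-cake identity $\varphi_\Omega(z)=\int_0^\infty(z-F_\Omega(\mu))_+\,d\mu-\int_0^\infty\min(z,F_\Omega(-\mu))\,d\mu$, the elementary formulas $\int_0^1(y-t^{-2}c)_+\,dt=g(y,c)$ and $\int_0^1\min(y,t^{-2}c)\,dt=\phi(y,c)$, and Fubini, one obtains
\[
 \psi_\Omega(y)=\int_0^\infty g\bigl(y,F_\Omega(\mu)\bigr)\,d\mu-\int_0^\infty\phi\bigl(y,F_\Omega(-\mu)\bigr)\,d\mu ,
\]
and the same identity for $\psi_x$ with $F_x$ replacing $F_\Omega$.

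For $H(\rho)\le\mathcal{E}(\rho)$ I would first write $A=\int_0^\infty(\mathrm{Id}-\Pi_\mu)\,d\mu-\int_0^\infty\Pi_{-\mu}\,d\mu$ (valid a.e.\ in $\mu$), so that $\mathcal{E}(\rho)=\int_0^\infty(\tau(\rho)-\tau(\rho\Pi_\mu))\,d\mu-\int_0^\infty\tau(\rho\Pi_{-\mu})\,d\mu$, the first integral converging because $\mathcal{E}^+(\rho)<\infty$. Feeding the above identity for $\psi_x$ into $H(\rho)=\int_X\psi_x(D\nu_\rho(x))\,d\mu(x)$ and using Fubini with $\int_X D\nu_\rho\,d\mu=\tau(\rho)$ and $\int_X F_x(\lambda)\,d\mu=F_X(\lambda)$ shows that $H(\rho)=\int_0^\infty(\tau(\rho)-\Phi(\mu))\,d\mu-\int_0^\infty\Phi(-\mu)\,d\mu$, where $\Phi(\nu):=\int_X\phi(D\nu_\rho(x),F_x(\nu))\,d\mu(x)$. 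Hence $H(\rho)\le\mathcal{E}(\rho)$ follows once one proves the pointwise spectral estimate $\tau(\rho\Pi_\nu)\le\Phi(\nu)$ for every $\nu\in\R$, which simultaneously gives the asserted finiteness of the positive part.

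To prove $\tau(\rho\Pi_\nu)\le\Phi(\nu)$ — the crux — fix a measurable multiplication operator $T$ with $\varepsilon\,\mathrm{Id}\le T\le\mathrm{Id}$, and put $R=T^{1/2}\rho^{1/2}$, $S=T^{-1/2}\Pi_\nu\rho^{1/2}$, both Hilbert--Schmidt. Then $\langle R,S\rangle_{HS}=\tau(\rho^{1/2}\Pi_\nu\rho^{1/2})=\tau(\rho\Pi_\nu)=:E$ and $\|R\|_{HS}^2=\tau(T\rho)=\int_X T\,D\nu_\rho\,d\mu=:P$, while, using $\Pi_\nu T^{-1}\Pi_\nu=\Pi_\nu+\Pi_\nu(T^{-1}-\mathrm{Id})\Pi_\nu$ (from $\Pi_\nu^2=\Pi_\nu$) together with $0\le\rho\le\mathrm{Id}$ and $T^{-1}\ge\mathrm{Id}$,
\[
 \|S\|_{HS}^2=E+\tau\bigl(\rho^{1/2}\Pi_\nu(T^{-1}-\mathrm{Id})\Pi_\nu\rho^{1/2}\bigr)\le E+\int_X(T^{-1}-1)F_x(\nu)\,d\mu=:E+Q .
\]
Cauchy--Schwarz gives $E^2\le P(E+Q)$, hence $E\le\tfrac12(P+\sqrt{P^2+4PQ})\le P+Q$ since $P,Q\ge0$. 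It remains to minimise $P+Q=\int_X(T(x)D\nu_\rho(x)+(T(x)^{-1}-1)F_x(\nu))\,d\mu(x)$ over $T(x)\in[\varepsilon,1]$: as $\varepsilon\downarrow0$ the integrand decreases to $\inf_{0<t\le1}(tD\nu_\rho(x)+(t^{-1}-1)F_x(\nu))=\phi(D\nu_\rho(x),F_x(\nu))$ (optimal $t(x)=\min(1,\sqrt{F_x(\nu)/D\nu_\rho(x)})$), and monotone convergence yields $E\le\Phi(\nu)$. Substituting back into the layer-cake formula for $\mathcal{E}(\rho)$ (with $\nu=\mu>0$ in the first integral and $\nu=-\mu$ in the second) gives $\mathcal{E}(\rho)\ge H(\rho)$. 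This Cauchy--Schwarz step is the point I expect to be the main obstacle: one must hit on exactly the $T^{1/2}/T^{-1/2}$ split and the identity $\Pi_\nu T^{-1}\Pi_\nu=\Pi_\nu+\Pi_\nu(T^{-1}-\mathrm{Id})\Pi_\nu$ so that one lands on the quadratic inequality $E^2\le P(E+Q)$, which is precisely what allows the conclusion $E\le P+Q$, matching the variational description of $\phi$ (and thus the $t\le1$ truncation built into $\psi$); coarser pairings give only $\tau(\rho\Pi_\nu)\le\int_X\sqrt{D\nu_\rho(x)F_x(\nu)}\,d\mu$, which exceeds $\Phi(\nu)$ on $\{F_x(\nu)>D\nu_\rho(x)\}$ and would break the balancing.

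The two convexity assertions are then soft. Being an infimum of linear functions, $\phi$ is concave and positively $1$-homogeneous on $\R_+^2$, with $0\le\phi(b,c)\le b$; hence $g=(b,c)\mapsto b-\phi(b,c)$ is convex and $1$-homogeneous, so $\phi$ is superadditive and $g$ subadditive. Since $\Omega\mapsto\nu_\rho(\Omega)$ and $\Omega\mapsto F_\Omega(\lambda)$ are additive set functions with densities $D\nu_\rho$ and $F_x$, Jensen's inequality in homogeneous form — $\int_{\Omega_i}h(f)\,d\mu\ge h(\int_{\Omega_i}f\,d\mu)$ for convex $1$-homogeneous $h$, reversed for concave $1$-homogeneous $h$ — applied termwise in the identity for $\psi_x$ gives $\psi_{\Omega_i}(\nu_\rho(\Omega_i))\le\int_{\Omega_i}\psi_x(D\nu_\rho(x))\,d\mu(x)$; summing over $i$ yields $H_{\Omega_i}(\rho)\le H(\rho)$. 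For the refinement inequality the same comparison is run discretely: if $\Omega_i=\bigsqcup_j\Omega'_j$, subadditivity of $g$ and superadditivity of $\phi$ applied to $\nu_\rho(\Omega_i)=\sum_j\nu_\rho(\Omega'_j)$ and $F_{\Omega_i}(\pm\mu)=\sum_j F_{\Omega'_j}(\pm\mu)$ give $\psi_{\Omega_i}(\nu_\rho(\Omega_i))\le\sum_j\psi_{\Omega'_j}(\nu_\rho(\Omega'_j))$, hence $H_{\Omega_i}(\rho)\le H_{\Omega'_i}(\rho)$.
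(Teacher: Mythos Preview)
Your argument is correct, and it is a genuinely different packaging of the proof from the paper's. The paper first establishes the \emph{pointwise} estimate $D\nu_{\Pi_{\ge\lambda}\rho\Pi_{\ge\lambda}}(x)\ge(\sqrt{D\nu_\rho(x)}-\sqrt{F_x(\lambda)})_+^2$ from the HS triangle inequality $\|\rho^{1/2}\chi_\Omega\|_{HS}\le\|\Pi_\lambda\chi_\Omega\|_{HS}+\|\rho^{1/2}\Pi_{\ge\lambda}\chi_\Omega\|_{HS}$ and a level-set localisation, then integrates in $(x,\lambda)$; the signed case is reduced to the positive one by cut-off $A_k=\max(A,k)$ and shift. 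Your route replaces this by the single \emph{global} bound $\tau(\rho\Pi_\nu)\le\int_X\phi(D\nu_\rho,F_x(\nu))\,d\mu$, obtained via the weighted Cauchy--Schwarz pairing $R=T^{1/2}\rho^{1/2}$, $S=T^{-1/2}\Pi_\nu\rho^{1/2}$ and the quadratic inequality $E^2\le P(E+Q)$; your layer-cake decomposition of $A$ then handles both signs at once without any shift argument. For the monotonicity part, the paper works at each fixed $t$ with Young's inequality for $\varphi_{\Omega,t}$ versus its Legendre transform $\varphi_{\Omega,t}^*(z)=t^{-2}\int_0^z F_\Omega$; your proof instead exploits the joint concavity/convexity and $1$-homogeneity of $\phi,g$ on $\R_+^2$, which is a cleaner way to see the sub/superadditivity. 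The paper's approach buys a genuinely local inequality at each $x$ (not just its integral), while yours buys a unified treatment of positive and negative energies and makes the convexity structure of $\psi$ transparent through the variational formula $\phi(b,c)=\inf_{0<t\le1}(tb+(t^{-1}-1)c)$. One small point of care: in the step $\|S\|_{HS}^2\le E+Q$ the bound $Q=\int_X(T^{-1}-1)F_x(\nu)\,d\mu$ may be infinite for a generic $T\in[\varepsilon,1]$ when $F_X(\nu)=\infty$; you should choose $T=1$ on $\{F_x(\nu)\ge D\nu_\rho\}$ from the start (which is anyway optimal there), so that $Q\le(\varepsilon^{-1}-1)\tau(\rho)<\infty$ before letting $\varepsilon\downarrow0$.
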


These balanced inequalities improve the unbalanced ones given in
\cite[Thm.~1.6-1.7]{Rumin10} for positive operators. They extend an
inequality due to Lieb and Thirring in the case of the Laplacian on
$\R^n$; see \cite{Lieb-Thirring}, \cite[Thm.~12.5]{Lieb-Loss} and
\S\ref{sec:illustration-rn-1}.

To clarify its relation with the previous result, we first remark that
since $F_\Omega^{-1}$ is increasing, one has
\begin{equation}
  \label{eq:13}
  \psi_\Omega \leq
  \varphi_\Omega = \varphi_{\Omega,1} \,.
\end{equation}
Hence if the state is confined in a single domain $\Omega$ of the
partition, the bound \eqref{eq:6} is stronger than $ H_\Omega(\rho)
\leq \mathcal{E}(\rho)$ in \eqref{eq:10}. Conversely, we will see in
\S\ref{sec:proof-hrho-leq} that if $A$ is \emph{positive}, one has
\begin{equation}
  \label{eq:14}
  \varphi_\Omega \bigl( \frac{x}{2} \bigr) \leq \psi_\Omega (x)\,,
\end{equation}
thus \eqref{eq:10} in the confined case actually gives
$\|\rho\|_\infty \varphi_\Omega\bigl(
\frac{\nu_\rho(\Omega)}{2\|\rho\|_\infty} \bigr) \leq
\mathcal{E}(\rho)$, close to \eqref{eq:6}, but weaker.

\smallskip

From the quantum-mechanical viewpoint, \eqref{eq:10} gives a lower
bound on the energy that \emph{had} a state $\rho$ \emph{before} the
measure of its distribution in the partition, given by
$\nu_\rho(\Omega_i)= \tau(\chi_{\Omega_i} \rho \chi_{\Omega_i})$, is
performed. Equivalently, one gets an a priori control, through
$H_{\Omega_i}(\rho)$, on the possible outcomes of a measure of the
distribution of a state of known energy, before this measure is
done. Indeed, in quantum physics (see e.g.
\cite{von-Neumann,Wikipedia}), an actual measure of this distribution
collapses $\rho$ into
$$
\widetilde \rho = \sum_i \chi_{\Omega_i} \rho \chi_{\Omega_i}\,,
$$ 
which is a sum of localized states $\rho_i$ in $\Omega_i$. By
\eqref{eq:6} and convexity of $\varphi_{\Omega_i}$, one has then
\begin{equation}
  \label{eq:15}
  \| \widetilde \rho\|_\infty \sum_i  \varphi_{\Omega_i}
  \Bigl(\frac{\nu_\rho(\Omega_i))}{\|\widetilde \rho\|_\infty} \Bigr)   
  \leq \sum_i \|\rho_i\|_\infty  \varphi_{\Omega_i}
  \Bigl(\frac{\nu_\rho(\Omega_i))}{\|\rho_i\|_\infty} \Bigr) \leq 
  \sum_i \mathcal{E}(\rho_i) = \mathcal{E}(\widetilde \rho) \,.
\end{equation}
This is stronger than \eqref{eq:10} by \eqref{eq:13}, but applies only
to collapsed states.

\smallskip

The monotonicity of $H_{\Omega_i}(\rho)$ in the partition makes it
behave like an \emph{information quantity} on the state. It increases
with a finer knowledge of the distribution of $\rho$, and is dominated
by the continuous integral $H(\rho)$ associated to the
``infinitesimal'' distribution of $\rho$. Actually these inequalities
imply other information-type inequalities like entropy bounds, as we
see now.

\subsection{Spatial versus spectral entropy and uncertainty principle}
\label{sec:spatial-vs-spectral}

One interesting feature of the Lieb--Thirring inequality \eqref{eq:10}
lies in its simple behaviour under the change of $A$ into $f(A)$ for
an increasing right continuous function $f$. Indeed, one has
$\Pi_{f(A)} (]-\infty, \lambda[) \subset \Pi_A
(]-\infty,f^{-1}(\lambda)[) $, and thus for the spectral measures
\begin{equation}
  \label{eq:16}
  F_{f(A),\Omega}(\lambda) \leq F_{A,\Omega} \circ f^{-1} (\lambda) \,,
\end{equation}
This allows to change the integrals $H(\rho)$ in \eqref{eq:10} into
many expressions, while using the corresponding energy
$\mathcal{E}_{f(A)}(\rho) = \tau(f(A) \rho)$.

An attractive choice is to use $\ln F_A^+(A)$, where $F^+_A(\lambda)$
is the right limit of
\begin{displaymath}
  F_A(\lambda) = \supess_x F_{A,x}(\lambda)\,.
\end{displaymath}
For this application, it is crucial that \eqref{eq:10} holds for
non-positive operator, since $\ln F_A^+(A)$ is not positive in
general. This leads to entropy bounds.

\begin{thm}
  \label{thm:entropies}
  Let $A$ be a self-adjoint operator and $\rho$ a state such that
  $\mathcal{E}^+_{\ln F_A^+(A)}(\rho)$ is finite. Then the integral
  $S_\mu(\rho)$ below has a finite negative part and it holds that
  \begin{equation}
    \label{eq:17}
    S_\lambda(\rho) + S_\mu(\rho) \geq 0 \,,
  \end{equation}
  where
  \begin{equation}
    \label{eq:18}
    S_\lambda(\rho) =  \mathcal{E}_{\ln F_A^+(A)}(\rho)  \quad
    \mathrm{and} \quad S_\mu (\rho) = - \int_X \ln \Bigl(\frac{D 
      \nu_\rho(x)}{\|\rho\|_\infty}\Bigr) d \nu_\rho(x) + 3 \tau(\rho)
    \,.
  \end{equation}
\end{thm}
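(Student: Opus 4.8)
The plan is to deduce the entropy inequality \eqref{eq:17} from the Lieb--Thirring inequality \eqref{eq:10} applied to the transformed operator $B = \ln F_A^+(A)$, by making a suitable choice that turns the ``balanced'' functions $\psi_x$ into logarithmic expressions. First I would record, using \eqref{eq:16} with $f = \ln F_A^+$, that the spectral density of $B$ satisfies
\begin{displaymath}
  F_{B,x}(\lambda) \leq F_{A,x}\bigl((F_A^+)^{-1}(e^\lambda)\bigr) \leq F_A^+\bigl((F_A^+)^{-1}(e^\lambda)\bigr) \leq e^\lambda\,,
\end{displaymath}
the last step because $F_A^+$ is right continuous and $(F_A^+)^{-1}$ is its pseudo-inverse; the middle step uses $F_{A,x}(\mu)\le F_A(\mu)\le F_A^+(\mu)$. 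Hence $F_{B,x}(\lambda) \leq e^\lambda$ for all $x$, so $F_{B,x}^{-1}(y) \geq \ln y$ for $y>0$ (and $F_{B,x}^{-1}(0)$ may be $-\infty$, which is harmless since it enters only through an integral). This is the key structural input coming from the choice of $f$.

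Next I would evaluate the left-hand side of \eqref{eq:10} for the operator $B$. With $F_{B,x}^{-1}(t^2 u) \geq \ln(t^2 u) = 2\ln t + \ln u$, the inner integral is
\begin{displaymath}
  \varphi_{x,t}(y) = \int_0^y F_{B,x}^{-1}(t^2 u)\,du \geq \int_0^y (2\ln t + \ln u)\,du = 2 y \ln t + y\ln y - y\,,
\end{displaymath}
and then integrating in $t$ over $[0,1]$, using $\int_0^1 \ln t\,dt = -1$, gives $\psi_x(y) \geq y\ln y - 3y$. Plugging $y = D\nu_\rho(x)/\|\rho\|_\infty$ and multiplying by $\|\rho\|_\infty$, the continuous left-hand term in \eqref{eq:10} becomes
\begin{displaymath}
  H(\rho) \geq \int_X \Bigl( D\nu_\rho(x) \ln\frac{D\nu_\rho(x)}{\|\rho\|_\infty} - 3 D\nu_\rho(x)\Bigr) d\mu(x) = \int_X \ln\Bigl(\frac{D\nu_\rho(x)}{\|\rho\|_\infty}\Bigr) d\nu_\rho(x) - 3\tau(\rho) = -S_\mu(\rho)\,,
\end{displaymath}
using \eqref{eq:2} and $\tau(\rho) = \nu_\rho(X) = \int_X D\nu_\rho\,d\mu$. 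Meanwhile the right-hand side of \eqref{eq:10} for $B$ is $\mathcal{E}_B(\rho) = \mathcal{E}_{\ln F_A^+(A)}(\rho) = S_\lambda(\rho)$. Chaining these gives $-S_\mu(\rho) \leq H(\rho) \leq S_\lambda(\rho)$, i.e. \eqref{eq:17}; the ``finite negative part'' claim for $S_\mu(\rho)$ follows from the corresponding finiteness assertion in Theorem~\ref{thm:Lieb-Thirring} for $H(\rho)$, once the hypothesis $\mathcal{E}^+_{\ln F_A^+(A)}(\rho) = \mathcal{E}^+_B(\rho) < \infty$ is in force so that \eqref{eq:10} applies to $B$.

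The main obstacle I anticipate is not the algebra above but the care needed around the integrability and the pseudo-inverse bounds: one must check that $\ln F_A^+(A)$ is a bona fide self-adjoint operator (via functional calculus, with $F_A^+$ increasing right continuous and everywhere positive — or else restrict to the part of the spectrum where $F_A^+>0$), that the inequality $F_{B,x}^{-1}(y)\ge \ln y$ survives the places where $F_{B,x}$ jumps or where $F_A^+$ vanishes, and that the term $\int_X \ln(D\nu_\rho/\|\rho\|_\infty)\,d\nu_\rho$ is well defined with controlled sign — its negative part is finite precisely because $t\mapsto t\ln t$ is bounded below, which is exactly where the constant $3$ (rather than $1$) in \eqref{eq:18} absorbs both the $-y$ from $\int\ln u$ and the $-2y$ from $2y\int_0^1\ln t\,dt$. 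Tracking these constants and sign conventions through \eqref{eq:10}, rather than any deep new idea, is the delicate part.
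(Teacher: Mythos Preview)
Your approach is exactly the paper's: apply Theorem~\ref{thm:Lieb-Thirring} to $B=\ln F_A^+(A)$, bound $F_{B,x}(\lambda)\le e^\lambda$, deduce $\psi_x(y)\ge y\ln y-3y$, and read off \eqref{eq:17}. One technical slip: the last step of your chain, $F_A^+\bigl((F_A^+)^{-1}(e^\lambda)\bigr)\le e^\lambda$, is not guaranteed for a \emph{right} continuous increasing function with the paper's pseudo-inverse $\varphi^{-1}(y)=\sup\{x:\varphi(x)\le y\}$ (a jump of $F_A^+$ across level $e^\lambda$ gives $F_A^+$ at the sup strictly above $e^\lambda$). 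The fix, which is what the paper does, is to stop at the \emph{left} continuous $F_A$ rather than pass to $F_A^+$: from $F_{A,x}\le F_A$ and $(F_A^+)^{-1}\le F_A^{-1}$ one gets
\[
F_{B,x}(\lambda)\ \le\ F_A\bigl((F_A^+)^{-1}(e^\lambda)\bigr)\ \le\ F_A\bigl(F_A^{-1}(e^\lambda)\bigr)\ \le\ e^\lambda,
\]
the last inequality by left continuity of $F_A$. With this correction your argument goes through verbatim.
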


The quantity $S_\mu(\rho)$ is related to the ``spatial entropy'' of
the state $\rho$, as seen from the measure space $X$. Actually,
\begin{displaymath}
  - S_\mu(\rho) + \tau(\rho)  (\ln \|\rho\|_\infty + 3) = \int_X \ln
  \bigr(\frac{d \nu_\rho}{d\mu}  
  \bigl) d \nu_\rho  = D_{KL}(\nu_\rho || \mu)
\end{displaymath}
is the Kullback--Leibler divergence from $\nu_\rho$ to $\mu$, or
relative entropy of $\nu_\rho$ to $\mu$. On the other hand,
\begin{equation}
  \label{eq:19}
  S_\lambda(\rho)  = \tau(\ln F_A^+(A) \rho) = \int_\R \ln
  F_A^+(\lambda) d \tau(\rho 
  \Pi_\lambda)  \,,
\end{equation}
deals with the ``spectral entropy'' of $\rho$, as seen from
distribution within the spectrum of $A$. Indeed $\ln F_A^+(\lambda)$
is an analytical \emph{ersatz} for $\ln \dim E_\lambda$ with
$E_\lambda = E_{]-\infty, \lambda]}(A)$. Namely, for invariant
operators acting on groups, one has $F_A^+(\lambda)= \dim_\Gamma
(E_\lambda) = \tau_\Gamma(\Pi_{]-\infty, \lambda]}(A))$ with the
notion of von Neumann's $\Gamma$-dimension; see
e.g. \cite[\S1]{Rumin10}.

This spectral entropy and the inequality \eqref{eq:17} have a striking
property: they are invariant under the change of $A$ into $f(A)$ for
any increasing homeomorphism $f$ of $\R$. Indeed the operator
$F_A^+(A)$ stays unchanged under such transforms, since they give
equality in \eqref{eq:16}. Thus, the spectral entropy is not sensitive
to the actual energy levels; it depends only on the ordered set
$\{\Pi_\lambda\}$, not its parametrization.

The quantities $S_{\mu, \lambda}(\rho)$ measure the indeterminacy in
position and energy of the state. They decrease respectively when
$\rho$ is concentrated in a set of small measure, or in small
energies. Notice that in the general case, if $X$ is not discrete and
$\mu(X)$ infinite, neither $S_\mu (\rho)$ nor $S_\lambda(\rho)$ are
bounded from below. Still, the lower bound for their sum in
\eqref{eq:17} means that the state can't be arbitrarily localized both
in position and energy. This may be seen as a general statement of the
uncertainty principle from the entropy viewpoint.

\subsection{Fourier transform and entropy}
\label{sec:four-transf-entr}

As an illustration of the previous result, we consider a state $\rho$
on $X=\R^n$, together with its Fourier transform $\widehat \rho$,
defined by $\widehat \rho (\widehat f) = \widehat{\rho (f)}$. We shall
see, by optimizing the choice of $A$ in \eqref{eq:17}, the following
bound on the sum of the entropy of the density of $\rho$ and the
entropy of the distribution of its Fourier transform.

\begin{thm}
  \label{thm:Fourier-entropies}
  Let $\mathrm{vol}^*$ be the measure $d^* \xi= (2\pi)^{-n } d \xi$ on
  $\R^n$, and $\rho$ as above. Consider the distribution function of
  $\nu_{\widehat \rho}$ relatively to $d^*\xi$
  \begin{equation}
    \label{eq:20}
    F_{\widehat\rho} (y) = \mathrm{vol}^* (\{\xi \in \R^n \mid  \frac{d
      \nu_{\widehat \rho}}{d^* \xi}(\xi) \geq y\})\,.  
  \end{equation}
  Suppose that the positive part of
  \begin{displaymath}
    S_F(\widehat\rho ) =  \int_0^{+\infty}  
    \ln (F_{\widehat\rho} (y))  F_{\widehat\rho} (y) dy 
  \end{displaymath}
  is finite. Then the negative part of
  \begin{displaymath}
    S_x(\rho) = -\int_{\R^n} \ln \bigl( \frac{d\nu_\rho}{dx}\bigr )
    d \nu_\rho(x) 
  \end{displaymath}
  is finite and it holds that
  \begin{equation}
    \label{eq:21}
    S_x(\rho) + S_F(\widehat \rho) \geq
    -\tau(\rho) (2 + \ln \|\rho\|_\infty) \,.
  \end{equation}
\end{thm}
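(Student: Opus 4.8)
The plan is to deduce \eqref{eq:21} from the entropy inequality \eqref{eq:17} of Theorem~\ref{thm:entropies}, applied to a well chosen bounded self-adjoint Fourier multiplier $A$ on $X=\R^n$, and to produce the sharp constant $2$ (rather than the $3$ that a crude use of \eqref{eq:18} would yield) through a rearrangement comparison between $S_\lambda(\rho)$ and $S_F(\widehat\rho)$. Write $g=\frac{d\nu_{\widehat\rho}}{d^*\xi}$ for the Fourier density of $\rho$ and let $g^*(v)=\inf\{t\ge0\mid\mathrm{vol}^*(\{g>t\})\le v\}$ be its decreasing rearrangement with respect to $\mathrm{vol}^*$; by construction $F_{\widehat\rho}\big(g^*(w)\big)\ge w$ for all $w>0$. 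Since $\mathrm{vol}^*$ is non-atomic one may fix a measure-preserving map $r\colon\R^n\to[0,\infty)$ with $g=g^*\circ r$, so that the sublevel sets $\{r\le v\}$ are, up to null sets, the sets where $g$ is largest, of $\mathrm{vol}^*$-measure $v$. I would take $A=m(-i\nabla)$ to be the Fourier multiplier with symbol $m(\xi)=\phi\big(r(\xi)\big)$, for any bounded strictly increasing $\phi$ on $[0,\infty)$, e.g. $\phi(s)=-e^{-s}$.

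The first, routine, step is to read off the spectral data of this $A$. Since its kernel is translation invariant, the spectral density $F_{A,x}(\lambda)=\mathrm{vol}^*(\{m<\lambda\})$ is independent of $x$, and its right limit is $F_A^+(\lambda)=\mathrm{vol}^*(\{m\le\lambda\})=\phi^{-1}(\lambda)$ on the range of $\phi$, so that $F_A^+(A)$ is the Fourier multiplier by $r(\xi)$ and $\ln F_A^+(A)$ the one by $\ln r(\xi)$. Hence, by the change of variables $v=r(\xi)$ (measure preserving, sending $g$ to $g^*$),
\begin{displaymath}
  S_\lambda(\rho)=\mathcal{E}_{\ln F_A^+(A)}(\rho)=\int_{\R^n}\ln\big(r(\xi)\big)\,d\nu_{\widehat\rho}(\xi)=\int_0^\infty\ln(v)\,g^*(v)\,dv .
\end{displaymath}
On the spectral side, a layer-cake computation from $F_{\widehat\rho}(y)=\int_{\R^n}\mathbf 1_{\{g\ge y\}}\,d^*\xi$ gives, with $\Lambda(t)=\int_0^t\ln F_{\widehat\rho}(y)\,dy$,
\begin{displaymath}
  S_F(\widehat\rho)=\int_{\R^n}\Lambda\big(g(\xi)\big)\,d^*\xi=\int_0^\infty\Lambda\big(g^*(v)\big)\,dv ,
\end{displaymath}
the positive-part hypothesis on $S_F$ being exactly what justifies Fubini here; the same hypothesis also yields $\mathcal{E}^+_{\ln F_A^+(A)}(\rho)=\int_1^\infty\ln(v)g^*(v)\,dv\le\int_{\R^n}\Lambda(g)^+\,d^*\xi\le(S_F)^+<\infty$, so that Theorem~\ref{thm:entropies} applies.

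The crucial estimate is $S_F(\widehat\rho)\ge S_\lambda(\rho)+\tau(\rho)$. Substituting $y=g^*(w)$ in $\Lambda(g^*(v))=\int_0^{g^*(v)}\ln F_{\widehat\rho}(y)\,dy$, using $\ln F_{\widehat\rho}(g^*(w))\ge\ln w$ and integrating by parts gives
\begin{displaymath}
  \Lambda\big(g^*(v)\big)\ \ge\ \ln(v)\,g^*(v)+\int_v^\infty\frac{g^*(w)}{w}\,dw ,
\end{displaymath}
and integrating over $v\in(0,\infty)$ while interchanging the order of integration in the last term, namely $\int_0^\infty\!\int_v^\infty\frac{g^*(w)}{w}\,dw\,dv=\int_0^\infty g^*(w)\,dw=\tau(\rho)$, yields the claim. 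Combining with Theorem~\ref{thm:entropies} then closes the argument: on $X=\R^n$ with $\mu=dx$ one has $D\nu_\rho=\frac{d\nu_\rho}{dx}$, so \eqref{eq:18} gives $S_\mu(\rho)=S_x(\rho)+\tau(\rho)(\ln\|\rho\|_\infty+3)$, and $S_\lambda(\rho)+S_\mu(\rho)\ge0$ reads $S_x(\rho)\ge-S_\lambda(\rho)-\tau(\rho)(\ln\|\rho\|_\infty+3)$; adding $S_F(\widehat\rho)\ge S_\lambda(\rho)+\tau(\rho)$ gives $S_x(\rho)+S_F(\widehat\rho)\ge-\tau(\rho)(2+\ln\|\rho\|_\infty)$, which is \eqref{eq:21}, while the finiteness of the negative part of $S_x(\rho)$ comes from that of $S_\mu(\rho)$ asserted in Theorem~\ref{thm:entropies}.

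The step I expect to be the main obstacle is the choice of $A$: one must see that the right operator is not a Fourier multiplier that is a function of $g$ — for such an $A$ one only obtains $S_\lambda(\rho)=\int g\ln F_{\widehat\rho}(g)\,d^*\xi$, which leads to the weaker constant $3$ — but one whose sublevel sets resolve the level sets of $g$, i.e. the multiplier attached to the rank map $r$, so that $S_\lambda(\rho)=\int_0^\infty\ln(v)g^*(v)\,dv$ is small enough for the extra $\tau(\rho)$ produced by the rearrangement estimate to survive into the final bound. A subsidiary technical burden is the measure-theoretic care needed for $g^*$ and $r$ when $g$ has level sets of positive $\mathrm{vol}^*$-measure or is unbounded near $v=0$, together with the careful bookkeeping of positive and negative parts of the a priori infinite integrals throughout — the boundary term in the integration by parts, for instance, vanishes precisely because $g^*(w)\le\tau(\rho)/w\to0$.
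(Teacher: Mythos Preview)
Your proposal is correct and follows essentially the same route as the paper: both choose as $A$ a Fourier multiplier whose sublevel sets resolve the superlevel sets of $g=d\nu_{\widehat\rho}/d^*\xi$ (the paper calls this a ``regular filling'' and packages the construction as Proposition~\ref{prop:bathtub} via the bathtub principle; you build it via the rank map $r$), compute $S_\lambda(\rho)=\int_0^\infty\ln(v)\,g^*(v)\,dv$, establish $S_\lambda(\rho)\le S_F(\widehat\rho)-\tau(\rho)$, and apply Theorem~\ref{thm:entropies}. Your pointwise bound on $\Lambda(g^*(v))$ is in fact an equality --- the paper derives the integrated version as the identity $\int_0^\infty\ln(y)\,F_{\widehat\rho}^{-1}(y)\,dy=S_F(\widehat\rho)-\tau(\rho)$ --- so the two arguments differ only in that the paper additionally proves this choice of $A$ is optimal among all Fourier multipliers, while you go straight to the one inequality you need.

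One small slip: in your justification that $\mathcal{E}^+_{\ln F_A^+(A)}(\rho)<\infty$, the step $\int_{\R^n}\Lambda(g)^+\,d^*\xi\le(S_F)^+$ is backwards (positive part of an integral is at most the integral of the positive part, not at least). The conclusion is still correct, but comes more cleanly from a direct Fubini swap: $\int_1^\infty\ln(v)\,g^*(v)\,dv=\int_{\{F_{\widehat\rho}(z)>1\}}\bigl[F_{\widehat\rho}(z)\ln F_{\widehat\rho}(z)-F_{\widehat\rho}(z)+1\bigr]\,dz\le(S_F)^+$.
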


This gives an \emph{operator free} version of the classical
uncertainty principle stating that a function (pure state) can't be
both arbitrarily localized in position and momentum. As will be seen
in \S\ref{sec:proof-theorem-fourier}, the bound \eqref{eq:21} is
equivalent to the previous one \eqref{eq:17}, with $A= \Delta$, for
states with \emph{spherical} density, but improves it on anisotropic
ones.

\smallskip

Still, the inequality \eqref{eq:21} is not symmetric in the roles of
$\rho$ and its Fourier transform $\widehat \rho$; because two kinds of
entropies are used at the space and frequency sides. However it
implies the following symmetric inequality.
\begin{cor}
  \label{cor:Fourier-entropies}
  It holds that
  \begin{equation}
    \label{eq:22}
    S_x (\rho) + S_\xi(\widehat \rho) = -\int_{\R^n} \ln \Bigl(
    \frac{d \nu_\rho}{d x} \Bigr) d 
    \nu_\rho -\int_{\R^n} \ln \Bigl( \frac{d \nu_{\widehat
        \rho}}{d^* \xi} \Bigr) d 
    \nu_{\widehat \rho}  \geq -\tau(\rho) (\ln \tau(\rho) + \ln
    \|\rho\|_\infty ) \,, 
  \end{equation}
  provided the positive part of one of these integrals is finite.
\end{cor}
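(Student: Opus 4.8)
The plan is to deduce the symmetric inequality \eqref{eq:22} from the
asymmetric bound \eqref{eq:21} by symmetrization, exploiting the fact
that \eqref{eq:21} holds with the roles of $\rho$ and $\widehat\rho$
interchanged as well. First I would observe that the same argument
that produces \eqref{eq:21} --- namely applying Theorem
\ref{thm:entropies} on $X=\R^n$ with $A=\Delta$ after optimizing the
spectral side --- is completely symmetric under the Fourier transform,
since $\widehat{\widehat\rho}$ is (up to the trivial reflection
$x\mapsto -x$, which changes neither density nor its entropy)
equal to $\rho$, and the Plancherel normalization $d^*\xi =
(2\pi)^{-n}d\xi$ is exactly the one that makes $\rho \mapsto
\widehat\rho$ an isometry of the relevant $L^2$ spaces. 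Hence one also
has
\begin{equation*}
  S_\xi(\widehat\rho) + S_F(\rho) \geq -\tau(\rho)(2 + \ln
  \|\rho\|_\infty)\,,
\end{equation*}
where $S_F(\rho) = \int_0^{+\infty} \ln(F_\rho(y)) F_\rho(y)\,dy$ with
$F_\rho$ the distribution function of $d\nu_\rho/dx$, and where one
uses $\|\widehat\rho\|_\infty = \|\rho\|_\infty$ and $\tau(\widehat\rho)
= \tau(\rho)$.

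The key remaining step is to bound the ``layer-cake'' entropy
$S_F(\widehat\rho) = \int_0^{+\infty}\ln(F_{\widehat\rho}(y))
F_{\widehat\rho}(y)\,dy$ by the ordinary differential entropy
$-S_\xi(\widehat\rho) = \int_{\R^n}\ln(d\nu_{\widehat\rho}/d^*\xi)\,
d\nu_{\widehat\rho}$ plus an explicit error. This is a purely
one-variable rearrangement computation: writing $g = d\nu_{\widehat
\rho}/d^*\xi$ and $\mu^* = \mathrm{vol}^*$, one has $\int_{\R^n} g\ln g
\,d\mu^* = \int_0^{+\infty}\ln(y)\,d\bigl(-\int_{\{g\geq y\}} g\,
d\mu^*\bigr)$, and an integration by parts together with the trivial
pointwise bound $F_{\widehat\rho}(y) \leq y^{-1}\int g\,d\mu^* =
\tau(\rho)/y$ (Markov's inequality) should convert $\int_0^\infty
\ln(F_{\widehat\rho}(y))F_{\widehat\rho}(y)\,dy$ into $-S_\xi
(\widehat\rho)$ up to a controlled term of size $\tau(\rho)\ln\tau(\rho)$
plus bounded constants. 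Concretely I expect an identity or inequality of
the shape $S_F(\widehat\rho) \leq -S_\xi(\widehat\rho) + c\,\tau(\rho) +
\tau(\rho)\ln\tau(\rho)$ with $c$ absorbing the constant $2$ from
\eqref{eq:21}; adding this to \eqref{eq:21} and rearranging yields
\eqref{eq:22}. The symmetric form is then obtained by taking the
half-sum of \eqref{eq:21} and its Fourier-dual together with this
layer-cake estimate applied on whichever side keeps the constants
matching.

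The main obstacle will be the bookkeeping of finiteness and sign
conditions: one must check that ``positive part of one of the integrals
in \eqref{eq:22} is finite'' is exactly the hypothesis needed to run the
integration-by-parts in the layer-cake step and to invoke \eqref{eq:21}
(whose hypothesis is finiteness of the positive part of $S_F(\widehat
\rho)$). In particular I would need to verify that finiteness of the
positive part of $\int \ln(d\nu_{\widehat\rho}/d^*\xi)\,d\nu_{\widehat
\rho}$ implies finiteness of the positive part of $S_F(\widehat\rho)$,
so that Theorem \ref{thm:Fourier-entropies} applies, and symmetrically
on the other side; this is where the Markov bound $F_{\widehat\rho}(y)
\leq \tau(\rho)/y$ does the real work, controlling the large-$y$ tail.
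The inequality between the two entropies goes only one way, so some care
is needed to make sure the error terms land on the correct side; but
since we are only asked for a lower bound in \eqref{eq:22}, the
direction $S_F \leq -S_\xi + (\text{error})$ is the favorable one and no
reverse estimate is required.
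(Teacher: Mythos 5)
Your layer-cake computation is essentially the paper's first step: writing $-S_\xi(\widehat\rho)-\tau(\rho)=\int_0^{+\infty}F_{\widehat\rho}(y)\ln y\,dy$ and using the Markov bound $y\,F_{\widehat\rho}(y)\le\tau(\rho)$ gives exactly the comparison \eqref{eq:53}, namely $S_F(\widehat\rho)\le S_\xi(\widehat\rho)+\tau(\rho)\bigl(1+\ln\tau(\rho)\bigr)$. Note the sign, though: the correct bound is by $+S_\xi(\widehat\rho)$, not $-S_\xi(\widehat\rho)$ as you wrote; with your sign, combining with \eqref{eq:21} would produce a lower bound on $S_x(\rho)-S_\xi(\widehat\rho)$, which is useless here. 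More seriously, even with the sign fixed, adding this comparison to \eqref{eq:21} only yields
\begin{equation*}
  S_x(\rho)+S_\xi(\widehat\rho)\ \ge\ -\tau(\rho)\bigl(3+\ln\tau(\rho)+\ln\|\rho\|_\infty\bigr)\,,
\end{equation*}
which is strictly weaker than \eqref{eq:22}: the corollary carries \emph{no} additive constant. This is the genuine gap in your proposal. Symmetrizing, i.e.\ half-summing \eqref{eq:21} with its Fourier-dual, does nothing to the constants (they just average), and the constant $2$ in \eqref{eq:21} cannot simply be dropped (the paper remarks that the analogous constant in \eqref{eq:47} cannot be taken below $1$), so no direct combination of these two ingredients reaches \eqref{eq:22}.

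The missing idea is the tensorization step: in the weaker bound above, every term except the constant $-3\tau(\rho)$ is additive under tensor products of unit-trace states ($S_x$, $S_\xi$, $\ln\|\cdot\|_\infty$ add, and $\tau\ln\tau$ vanishes for unit trace), so applying the weak bound to $\otimes^N\rho$ on $\R^{nN}$, dividing by $N$ and letting $N\to+\infty$ removes the constant; the general case then follows by homogeneity $\rho\mapsto c\rho$. You would need to add this argument, together with the bookkeeping that the finiteness hypotheses pass to $\otimes^N\rho$. Finally, the Fourier-dual version of \eqref{eq:21} you invoke is not needed at all: \eqref{eq:21} plus \eqref{eq:53} applied to $\widehat\rho$ (and then tensorization) is the whole proof.
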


This inequality has a better general behaviour than \eqref{eq:21};
namely it is additive on tensor products of trace one
states. Moreover, at least on projections on finite dimensional
spaces, the lower bound is related to von Neumann's proper entropy of
$\rho$, defined by $S(\rho) = -\tau(\rho\ln \rho)$; as discussed in
\S\ref{sec:around-corollary}.

\subsection{Log-Sobolev inequalities}
\label{sec:Log-sobol-ineq}

In the general setting, we finally observe that
Theorem~\ref{thm:entropies} is also related to more classical
$\log$-Sobolev inequalities, as stated for instance in \cite{Davies}
or \cite{Lieb-Loss} for the Laplacian. Namely, applying Jensen
inequality on the spectral entropy in \eqref{eq:17} leads to the
following entropy-energy bound.
\begin{cor}
  \label{cor:entropy-energy}
  Let $A$ be a self-adjoint operator and $\rho$ a state such that
  $\mathcal{E}_A^+(\rho)$ is finite and $\tau(\rho)= 1$. Then it holds
  that
  \begin{equation}
    \label{eq:23}
    -S_\mu(\rho) \leq (\ln F_A)^c (\mathcal{E}(\rho)) \,,
  \end{equation}
  where $(\ln F_A)^c$ is the concave hull of $\ln F_A$.
\end{cor}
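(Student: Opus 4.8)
The plan is to derive Corollary~\ref{cor:entropy-energy} directly from Theorem~\ref{thm:entropies} by applying Jensen's inequality to the spectral-entropy term. Recall that by \eqref{eq:17} and \eqref{eq:18}, for a state with $\tau(\rho)=1$ one has
\begin{equation*}
  -S_\mu(\rho) \leq S_\lambda(\rho) = \int_\R \ln F_A^+(\lambda)\, d\tau(\rho\Pi_\lambda)\,,
\end{equation*}
where $d\tau(\rho\Pi_\lambda)$ is, since $\tau(\rho)=1$, a probability measure on $\R$ (it is the spectral distribution of the energy of $\rho$). The first step is therefore to regard $\lambda\mapsto \ln F_A^+(\lambda)$ as a function integrated against this probability measure, whose barycenter (mean) is exactly $\int_\R \lambda\, d\tau(\rho\Pi_\lambda) = \tau(A\rho) = \mathcal{E}(\rho)$. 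Here I should note that $\ln F_A$ and $\ln F_A^+$ differ only on an at most countable set, hence have the same integral against any measure with no atoms there, or more simply that $\ln F_A^+ \le (\ln F_A)^c$ since the concave hull dominates the function (up to the right-continuous modification, which does not affect the concave hull's value). So it suffices to bound $\int \ln F_A^+\, d\tau(\rho\Pi_\lambda)$.

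The second and central step is Jensen's inequality for the \emph{concave} function $(\ln F_A)^c$: since this is the concave hull of $\ln F_A$, it is concave on $\R$ (with values in $[-\infty,+\infty)$), it dominates $\ln F_A \ge \ln F_A^+$ up to the countable modification set, and therefore
\begin{equation*}
  \int_\R \ln F_A^+(\lambda)\, d\tau(\rho\Pi_\lambda) \;\leq\; \int_\R (\ln F_A)^c(\lambda)\, d\tau(\rho\Pi_\lambda) \;\leq\; (\ln F_A)^c\!\left( \int_\R \lambda\, d\tau(\rho\Pi_\lambda)\right) = (\ln F_A)^c(\mathcal{E}(\rho))\,,
\end{equation*}
the middle inequality being precisely Jensen applied to the concave $(\ln F_A)^c$ and the probability measure $d\tau(\rho\Pi_\lambda)$. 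Chaining this with the displayed consequence of Theorem~\ref{thm:entropies} gives $-S_\mu(\rho) \le (\ln F_A)^c(\mathcal{E}(\rho))$, which is \eqref{eq:23}.

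I expect the main obstacle to be the integrability and measure-theoretic bookkeeping rather than the convexity argument itself. One must check that $d\tau(\rho\Pi_\lambda)$ is genuinely a probability measure on $\R$ whose first moment is $\mathcal{E}(\rho)$ — this uses $\tau(\rho)=1$ and the finiteness of $\mathcal{E}_A^+(\rho)$, so that $\int \lambda^+\, d\tau(\rho\Pi_\lambda) = \mathcal{E}^+(\rho) < \infty$ and the barycenter is well-defined in $[-\infty,+\infty)$. One then needs Jensen in the form valid for concave functions possibly taking the value $-\infty$ and for measures with a possibly infinite negative first moment; this holds because $(\ln F_A)^c$ is concave and bounded above on any half-line bounded above, so the positive part of $(\ln F_A)^c(\lambda)$ is controlled by an affine function of $\lambda^+$, keeping $\int ((\ln F_A)^c)^+\, d\tau(\rho\Pi_\lambda)$ finite. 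Finally, one should record the degenerate cases: if $\mathcal{E}(\rho) = -\infty$ the right-hand side is $-\infty$, forcing $S_\mu(\rho) = +\infty$, consistent with the claim; and the hypothesis $\mathcal{E}_{\ln F_A^+(A)}^+(\rho) < \infty$ needed to invoke Theorem~\ref{thm:entropies} follows from $\mathcal{E}_A^+(\rho)<\infty$ together with the bound $\ln F_A^+(\lambda) \le (\ln F_A)^c(\lambda) \le a\lambda + b$ on $\{\lambda \ge 0\}$ for suitable constants, i.e. the spectral entropy integrand grows at most linearly in the energy.
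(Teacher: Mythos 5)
Your argument is exactly the paper's: the corollary is obtained from Theorem~\ref{thm:entropies} by applying Jensen's inequality to the spectral entropy $S_\lambda(\rho)=\int_\R \ln F_A^+(\lambda)\,d\tau(\rho\Pi_\lambda)$, viewed against the probability measure $d\tau(\rho\Pi_\lambda)$ with barycenter $\mathcal{E}(\rho)$, after majorizing $\ln F_A^+$ by the concave hull $(\ln F_A)^c$. Your extra bookkeeping (affine majorant of $(\ln F_A)^c$ to verify the hypothesis of Theorem~\ref{thm:entropies}, and the degenerate cases) is sound and only fleshes out what the paper leaves implicit.
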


This improves and extends Theorem~1.9 in \cite{Rumin10}, proved there
for positive operators and with a larger energy term. The inequality
\eqref{eq:23} is balanced and even invariant under an affine rescaling
of energy $A \rightarrow k_1 A + k_2$ with $k_1>0$.  It is also
equivalent to the family of parametric $\log$-Sobolev inequalities
\begin{equation}
  \label{eq:24}
  - S_\mu (\rho) \leq m(t) \tau(\rho) + t
  \mathcal{E}(\rho) \,,
\end{equation}
where $m(t) = \inf_{\lambda \geq 0} (\ln F(\lambda) -t\lambda)$ is
minus the concave-Legendre transform of $\ln F$. Such inequalities
actually hold on mixed states, without Markovian or positivity
assumption on $A$.

\section{The confined states inequalities}
\label{sec:proof-conf-stat}

\subsection{Proof of Theorem~\ref{thm:CLR}}
\label{sec:proof-theorem}

We first show Theorem~\ref{thm:CLR} for positive operator, and use
after the invariance through energy shift to extend it in the general
case.

The proof in the positive case is actually an improvement of an
argument given in \cite[\S3.1]{Rumin10}. Let $\Pi_{\geq \lambda} =
\Pi_{[\lambda, +\infty[}(A) = \mathrm{Id} - \Pi_\lambda$. We observe
that
\begin{equation}
  \label{eq:25}
  \mathcal{E}(\rho) = \tau(\rho^{1/2} A \rho^{1/2}) = \int_0^{+\infty} \tau
  (\rho^{1/2}\Pi_{\geq \lambda} \rho^{1/2} ) d\lambda \,.
\end{equation}
Since $\supp \rho \subset \Omega$, one has $\rho \leq \|\rho\|_\infty
\chi_\Omega$. Hence, assuming by homogeneity in $\rho$ that
$\|\rho\|_\infty= 1$ in the sequel, it holds that
\begin{align*}
  \tau (\rho^{1/2}\Pi_{\geq \lambda} \rho^{1/2} ) & = \tau(\rho) -
  \tau(\rho^{1/2}\Pi_{\lambda} \rho^{1/2}) = \tau(\rho) -
  \tau(\Pi_\lambda \rho \Pi_\lambda) \\
  & \geq \tau(\rho) -
  \tau(\Pi_\lambda \chi_\Omega \Pi_\lambda) \\
  & = \tau(\rho) - F_\Omega(\lambda) \,.
\end{align*}
Using it in \eqref{eq:25} for $\lambda < F_\Omega^{-1}(\tau(\rho)) =
\sup \{u \mid F_\Omega(u) \leq \tau(\rho)\}$ yields
\begin{align}
  \mathcal{E}(\rho) & \geq \int_0^{F_\Omega^{-1}(\tau(\rho))} \tau
  (\rho^{1/2} \Pi_{\geq \lambda} \rho^{1/2} ) d\lambda \nonumber \\
  & \geq \int_0^{F_\Omega^{-1}(\tau(\rho))} \bigl( \tau(\rho) -
  F_\Omega(\lambda) \bigr) d\lambda \nonumber \\
  & = \int_0^{F_\Omega^{-1}(\tau(\rho))} \Bigl(
  \int_{F_\Omega(\lambda)}^{\tau(\rho)} du \Bigr) d\lambda =
  \iint_{\{0 \leq F_\Omega(\lambda) \leq u \leq \tau(\rho)\}} du
  d\lambda
  \label{eq:26}\\
  &= \int_0^{\tau(\rho)}
  \Bigl( \int_0^{F_\Omega^{-1}(u)} d\lambda \Bigr) du \nonumber \\
  & = \int_0^{\tau(\rho)} F_\Omega^{-1}(u) du = \varphi_\Omega
  (\tau(\rho)) \,, \nonumber
\end{align}
as needed.

For a general self-adjoint operator, we consider $A_k = A \Pi_{\geq
  k}$. By positivity of $A_k - k$ and the behaviour of \eqref{eq:6} in
such a shift, it holds for any $k$ that
\begin{equation}
  \label{eq:27}
  \varphi_{A_k,\Omega}(\tau(\rho))
  \leq \mathcal{E}_{A_k} (\rho)\,.
\end{equation}
In particular, for $k=0$, one has $\max (F_A^{-1},0) \leq
F_{A_0}^{-1}$ and thus
\begin{displaymath}
  \int_0^{\tau(\rho)} \max (F_A^{-1}(u),0) du \leq
  \varphi_{A_0,\Omega} (\tau(\rho) ) \leq 
  \mathcal{E}_{A_0} (\rho)= \mathcal{E}^+(\rho) < \infty
\end{displaymath}
by hypothesis. Hence the integral $\varphi_{A,\Omega} (\tau(\rho)) =
\int_0^{\tau(\rho)} F_{A,\Omega}^{-1}(u) du$ makes sense in $[-\infty,
+\infty[$. If $\varphi_{A,\Omega} (\tau(\rho)) = -\infty$ there is
nothing more to prove, and we assume henceforth that
$\varphi_{A,\Omega} (\tau(\rho))$ is finite. This implies that the
increasing function $F_{A,\Omega}^{-1}(u)$ is finite for $u <
\tau(\rho)$. In particular, one has necessarily $F_{A,\Omega}(k)$
finite for $k \ll 0$, and thus by dominated convergence
\begin{equation}
  \label{eq:28}
  F_{A,\Omega}(k) = \tau(\chi_\Omega \Pi_{]-\infty, k[}(A)
  \chi_\Omega ) \searrow 0 \quad \mathrm{when}\quad k
  \searrow -\infty\,.
\end{equation}
Since, for $k \leq \lambda$, one has $\Pi_\lambda(A_k)
=\Pi_{[k,\lambda[}(A) = \Pi_\lambda(A) - \Pi_k(A)$, it holds that
\begin{equation}
  \label{eq:29}
  F_{A_k,\Omega}(\lambda) = \max(F_{A,\Omega}(\lambda) -
  F_{A,\Omega} (k),0)\,.
\end{equation}
This leads to $F_{A_k,\Omega}^{-1}(u) = F_{A,\Omega}^{-1}(u+
F_{A,\Omega}(k))$, and finally
\begin{displaymath}
  \varphi_{A_k,\Omega} (y) = \int_0^y F_{A_k,\Omega}^{-1}(u) du =
  \int_0^y F_{A,\Omega}^{-1}(u + F_{A,\Omega}(k) ) du \,.
\end{displaymath}
Together with \eqref{eq:28} and \eqref{eq:27}, this shows that
$\varphi_{A_k,\Omega} (\tau(\rho)) \searrow \varphi_{A,\Omega}
(\tau(\rho))$ when $k \searrow -\infty$; by dominated convergence for
the positive part, and monotone convergence for the negative one. For
the same reasons, one has $\mathcal{E}_{A_k}(\rho) \searrow
\mathcal{E}_A(\rho)$ for $k \searrow -\infty$, giving the result by
\eqref{eq:27}.

\subsection{Illustration in $\R^n$}
\label{sec:illustration-rn}

As a first illustration of the previous result, we consider the case
of the Laplacian on $X= \R^n$. By group invariance, the density
$F_x(\lambda)$ is a constant given by the value at $0$ of the kernel
of $\Pi_\lambda$. To compute it, we remark that the spectral spaces
$E_\Delta(\lambda)$ are functions whose Fourier transforms are
supported in the ball $B_n(0,\lambda^{1/2})$. It follows easily (see
e.g. \cite[\S4.2]{Rumin10}) that
\begin{equation}
  \label{eq:30}
  F_x(\lambda) = \widehat{\chi}_{B_n(0,\lambda^{1/2})}(0) = C_n
  \lambda^{n/2} \quad \mathrm{with} \quad C_n = (2\pi)^{-n}
  \mathrm{vol}(B_n(0,1)) \,, 
\end{equation}
and thus
\begin{equation}
  \label{eq:31}
  \varphi_\Omega (y) = \int_0^y F_\Omega^{-1}(u) du = \frac{n}{n+2} (C_n 
  \mathrm{vol}(\Omega))^{-2/n} y^{1+2/n}\,.
\end{equation}

Applying Theorem~\ref{thm:CLR} to the orthogonal projection $\rho$ on
the first $N$ Dirichlet eigenfunctions of $\Delta$ in $\Omega$, yields
the following inequality, due to Berezin and Li-Yau (see
\cite{Li-Yau}, \cite[Thm.~12.3]{Lieb-Loss})
\begin{equation}
  \label{eq:32}
  \mathcal{E}(\rho) = \sum_{i=1}^N \lambda_i(\Omega) \geq  \frac{n}{n+2} (C_n 
  \mathrm{vol}(\Omega))^{-2/n} N^{1+2/n} \,.
\end{equation}
When $\Omega$ is a domain of finite boundary area, this bound is known
to be sharp, up to lower order term in $N$, in the semiclassical
limit, i.e. for $N$ goes to $\infty$; see
e.g. \cite[Thm.~12.11]{Lieb-Loss},

\subsection{Equality case and bathtub filling}
\label{sec:equal-case-bathtub}

The proof of Theorem~\ref{thm:CLR} above shows that
$\mathcal{E}(\rho)$ gets smaller and approaches the proposed lower
bound $\varphi_\Omega(\tau(\rho))$ when:
\begin{enumerate}
\item $\rho$ is the largest possible, i.e close to $\chi_\Omega$, on
  $\Pi_\lambda$ for $\lambda < \lambda_0 = F_\Omega^{-1}(\tau(\rho))$;
\item and $\rho$ is the smallest possible, i.e. close to $0$, on
  $\Pi_{> \lambda_0}$.
\end{enumerate}
That means that $\rho$ has to fill up, or saturate, as much as
possible the lower energy levels it can, under the constraint that
$\rho \subset \subset \Omega$ and until the volume $\tau(\rho)$ is
reached. This kind of idea, clear from the physical viewpoint, is
actually quite similar to the \emph{bathtub principle} used in the
proof of Li-Yau inequality \eqref{eq:32} given by Lieb and Loss in
\cite[Theorem~12.3]{Lieb-Loss}.

In general, one can't have equality in \eqref{eq:6} unless $\rho$ is
pinched between $\Pi_{\lambda_0} = \Pi_{]-\infty, \lambda_0[}(A)$ and
$\Pi_{\lambda_0^+}= \Pi_{]-\infty, \lambda_0]}(A)$ \emph{and}
supported in $\Omega$. Hence, if the spectral spaces of $A$ are not
confined in a proper subspace $\Omega$ of the ambient space $X$, the
only remaining possibility is to take $\Omega=X$ itself. This requires
of course that $\dim E_{\lambda_0^-}= \tau(\Pi_{\lambda_0^- }) \leq
\tau(\rho)$ be finite.

\subsection{Asymptotic sharpness and amenability}
\label{sec:asympt-sharpn-amen}

One can go beyond the previous equality case and describe situations
with $X$ infinite and where \eqref{eq:6} is asymptotically
sharp. Given $\lambda$ and $\Omega$, one considers the two states
\begin{displaymath}
  \rho_\Omega = \chi_\Omega \Pi_\lambda \chi_\Omega   \quad \mathrm{and} \quad 
  \widetilde \rho_\Omega =  \Pi_\lambda \chi_\Omega \Pi_\lambda \,. 
\end{displaymath}
Notice that $\rho_\Omega$ is confined in $\Omega$ while $\widetilde
\rho_\Omega$ is not.  Still, one has $\tau(\rho_\Omega) =
\tau(\widetilde \rho_\Omega) = F_\Omega(\lambda)$ and we claim that
\begin{equation}
  \label{eq:33}
  \varphi_\Omega(\tau(\rho_\Omega))= \varphi_\Omega(F_\Omega(\lambda)) =
  \int_{]-\infty,\lambda[} u 
  dF_\Omega(u) =  \mathcal{E}(\widetilde \rho_\Omega)\,, 
\end{equation}
if this converges. To see this we proceed as in \eqref{eq:26},
assuming first that $A$ is positive. One finds
\begin{align*}
  \varphi_\Omega(F_\Omega(\lambda)) & =
  \int_0^{F_\Omega^{-1}(F_\Omega(\lambda)) }
  (F_\Omega(\lambda) - F_\Omega(u) ) du \\
  &= \int_0^\lambda (F_\Omega(\lambda) - F_\Omega(u) ) du \,,
\end{align*}
since $F_\Omega(u) = F_\Omega(\lambda)$ for $0\leq \lambda\leq u \leq
F_\Omega^{-1}(F_\Omega(\lambda))$. Thus
\begin{align*}
  \varphi_\Omega(F_\Omega(\lambda)) & = \int_{0\leq u \leq v < \lambda
  }
  dF_\Omega(v) du \\
  & = \int_{[0, \lambda[} v d F_\Omega(v) \,.
\end{align*}
The general case follows by energy cut-off and shift as in
\S\ref{sec:proof-theorem}.

When $\Omega$ is large, $\|\widetilde \rho_\Omega\|_\infty$ is close
to $1$, and \eqref{eq:33} means that \eqref{eq:6} is sharp for these
states $\widetilde \rho_\Omega$.  However they are not confined in
$\Omega$. Still $\mathcal{E}(\rho_\Omega)$ may be compared to
$\mathcal{E}(\widetilde \rho_\Omega)$ in the following situation. If
$X$ is a \emph{discrete} metric space, and $A$ is a bounded
\emph{local} operator, i.e. $Af(x)$ depends only on the value of $f$
in the ball $B(x,r)$, then one has
\begin{align*}
  |\mathcal{E}(\rho_\Omega) - \mathcal{E}(\widetilde \rho_\Omega)| &=
  |\tau(A \chi_\Omega \Pi_\lambda \chi_\Omega) - \tau (A \Pi_\lambda
  \chi_\Omega \Pi_\lambda)| \\
  & = |\tau(\Pi_\lambda \chi_\Omega (A\chi_\Omega - \chi_\Omega A) )|
  \\
  & \leq 2 \|A\|_\infty |\partial_r \Omega |\,,
\end{align*}
where $|\partial_r \Omega |$ is the cardinal of $\partial_r \Omega =
\{x \in X \mid d(x, \Omega) \leq r$ and $d(x,\Omega^c) \leq r\}$. This
leads to the following asymptotic sharpness result for \eqref{eq:6}.

\begin{prop}
  \label{prop:amenable}
  Let $X=\Gamma$ be a discrete infinite amenable group, endowed with
  an invariant measure, and let $A$ be a local translation invariant
  symmetric operator on $X$. Suppose that $\Omega_n$ is a F\"olner
  sequence such that $|\partial_r \Omega_n | / |\Omega_n| \rightarrow
  0$ when $n \rightarrow +\infty$. Set $F=F_x$ and $\varphi=
  \varphi_x$ (constant in $x$). Then it holds that
  \begin{equation}
    \label{eq:34}
    \lim_{n \rightarrow +\infty} \mathcal{E}(\rho_{\Omega_n}) /
    |\Omega_n| = \lim_{n \rightarrow +\infty} \|\rho_{\Omega_n}\|_\infty 
    \varphi_{\Omega_n}\bigl(\frac{\tau(\rho_{\Omega_n})}{\|
      \rho_{\Omega_n} \|_\infty } \bigr) / |\Omega_n| =
    \varphi(F(\lambda)) \,.
  \end{equation}
\end{prop}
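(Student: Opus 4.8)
The plan is to normalize all three quantities by $|\Omega_n|$, compute the middle one explicitly from translation invariance, and squeeze it between the other two, using \eqref{eq:33}, the boundary estimate just before the statement, Theorem~\ref{thm:CLR}, and the hypothesis $|\partial_r\Omega_n|/|\Omega_n|\to0$. I use throughout that a local operator on a discrete space is bounded, so that every trace and integral occurring in \S\ref{sec:asympt-sharpn-amen} is finite (I also take $F(\lambda)<\infty$); one may assume $F(\lambda)>0$, since otherwise $\rho_{\Omega_n}=0$ and \eqref{eq:34} is trivial.

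First I would record the scaling: since $F_x\equiv F$ and $\mu$ is the counting measure, \eqref{eq:4}--\eqref{eq:5} give $F_{\Omega_n}(\lambda)=|\Omega_n|\,F(\lambda)$, whence $F_{\Omega_n}^{-1}(u)=F^{-1}(u/|\Omega_n|)$ and, after the change of variable $u\mapsto|\Omega_n|u$ in the defining integral, $\varphi_{\Omega_n}(y)=|\Omega_n|\,\varphi\bigl(y/|\Omega_n|\bigr)$. Since $\tau(\rho_{\Omega_n})=\tau(\widetilde\rho_{\Omega_n})=F_{\Omega_n}(\lambda)=|\Omega_n|\,F(\lambda)$, formula \eqref{eq:33} becomes
\[
  \mathcal{E}(\widetilde\rho_{\Omega_n})=\varphi_{\Omega_n}\bigl(F_{\Omega_n}(\lambda)\bigr)=|\Omega_n|\,\varphi\bigl(F(\lambda)\bigr)\,.
\]
Feeding this into the boundary estimate $|\mathcal{E}(\rho_{\Omega_n})-\mathcal{E}(\widetilde\rho_{\Omega_n})|\le 2\|A\|_\infty\,|\partial_r\Omega_n|$ from \S\ref{sec:asympt-sharpn-amen}, dividing by $|\Omega_n|$, and using the Følner hypothesis gives $\mathcal{E}(\rho_{\Omega_n})/|\Omega_n|\to\varphi(F(\lambda))$.

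It remains to treat the middle term $M_n:=\|\rho_{\Omega_n}\|_\infty\,\varphi_{\Omega_n}\bigl(\tau(\rho_{\Omega_n})/\|\rho_{\Omega_n}\|_\infty\bigr)$. Writing $c_n=\|\rho_{\Omega_n}\|_\infty$, one has $0<c_n\le1$ (the upper bound because $\rho_{\Omega_n}\le\chi_{\Omega_n}$), and inserting the scaling of $\varphi_{\Omega_n}$ with $\tau(\rho_{\Omega_n})=|\Omega_n|F(\lambda)$ gives $M_n/|\Omega_n|=c_n\,\varphi\bigl(F(\lambda)/c_n\bigr)$. On one hand, Theorem~\ref{thm:CLR} applies to $\rho_{\Omega_n}$ (supported in $\Omega_n$, with $\mathcal{E}^+$ finite since $A$ is bounded) and yields $M_n\le\mathcal{E}(\rho_{\Omega_n})$, so $\limsup_n M_n/|\Omega_n|\le\varphi(F(\lambda))$. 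On the other hand, convexity of $\varphi$ together with $\varphi(0)=0$ gives, for every $c\in(0,1]$,
\[
  \varphi(y)=\varphi\bigl(c\cdot(y/c)+(1-c)\cdot0\bigr)\le c\,\varphi(y/c)\,,
\]
so $M_n/|\Omega_n|\ge\varphi(F(\lambda))$ for all $n$. Squeezing with the limit of $\mathcal{E}(\rho_{\Omega_n})/|\Omega_n|$ then gives $M_n/|\Omega_n|\to\varphi(F(\lambda))$, which is \eqref{eq:34}; in passing it shows $\|\rho_{\Omega_n}\|_\infty\to1$.

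I expect no real obstacle: every step is either the cited boundary bound, \eqref{eq:33}, Theorem~\ref{thm:CLR}, or the elementary monotonicity $c\,\varphi(y/c)\ge\varphi(y)$ for $c\le1$ of the perspective function of $\varphi$ — and it is precisely this last point that lets the squeeze work without a separate proof that $\|\rho_{\Omega_n}\|_\infty\to1$. The one thing needing a word of care is finiteness — that $\varphi(F(\lambda))<\infty$ and that the traces and integrals above are all well defined — which is what the boundedness of $A$ provides, and which also makes the proviso ``if this converges'' in \eqref{eq:33} automatic in the present setting.
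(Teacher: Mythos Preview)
Your proof is correct. For the limit of $\mathcal{E}(\rho_{\Omega_n})/|\Omega_n|$ you follow exactly the route the paper lays out in \S\ref{sec:asympt-sharpn-amen}: the scaling $\varphi_{\Omega_n}(y)=|\Omega_n|\,\varphi(y/|\Omega_n|)$, formula \eqref{eq:33}, and the boundary estimate combined with the F\o lner hypothesis.

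For the middle term you take a genuinely different path. The paper's discussion points toward the fact that $\|\widetilde\rho_\Omega\|_\infty$ is close to $1$ for large $\Omega$ (and hence so is $\|\rho_{\Omega_n}\|_\infty$, since $\rho_\Omega=(\chi_\Omega\Pi_\lambda)(\chi_\Omega\Pi_\lambda)^*$ and $\widetilde\rho_\Omega=(\chi_\Omega\Pi_\lambda)^*(\chi_\Omega\Pi_\lambda)$ share their nonzero spectrum); one would then pass to the limit in $c_n\,\varphi(F(\lambda)/c_n)$ by continuity at $c=1$. You bypass this entirely with a squeeze: Theorem~\ref{thm:CLR} supplies the upper bound $M_n\le\mathcal{E}(\rho_{\Omega_n})$, and the perspective inequality $c\,\varphi(y/c)\ge\varphi(y)$ for convex $\varphi$ with $\varphi(0)=0$ supplies the lower bound. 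This is neater and more self-contained --- it uses only results already proved in the paper plus an elementary convexity fact --- whereas the paper's hinted route still needs a separate argument (e.g.\ translating a fixed $f\in E_\lambda$ into the interior of the F\o lner sets) to justify $\|\widetilde\rho_{\Omega_n}\|_\infty\to1$. One small caveat on your closing aside: the squeeze ``in passing shows $\|\rho_{\Omega_n}\|_\infty\to1$'' only when $\varphi$ is \emph{strictly} convex near $F(\lambda)$; if $F^{-1}$ happens to be constant on an interval above $F(\lambda)$, the map $c\mapsto c\,\varphi(F(\lambda)/c)$ is flat there and the squeeze alone does not force $c_n\to1$. This does not affect \eqref{eq:34} itself.
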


This may be seen as the counterpart in the discrete setting to the
semiclassical result recalled in \S\ref{sec:illustration-rn}; here the
sharpness of \eqref{eq:6} is achieved on large domains and fixed
energy, instead of the contrary in the semiclassical limit. This
statement applies for instance to the discrete Laplacians on
$\ell^2$-cochains over amenable coverings of finite simplicial
complex.

\subsection{Faber--Krahn inequality and the heat technique}
\label{sec:faber-krahn-ineq}

We can compare the lower bound on the Dirichlet spectrum, or
Faber-Krahn inequality, obtained in \eqref{eq:7}:
\begin{equation}
  \label{eq:35}
  \lambda_1(\Omega) \geq \varphi_\Omega(1)\,,
\end{equation}
to the one shown in \cite[Prop.~II.2]{Coulhon2} using a heat kernel
technique. Namely, it follows from the Nash inequality given there
that if $A$ is a positive operator, one has
\begin{equation}
  \label{eq:36}
  \lambda_1(\Omega) \geq \theta(\Omega)= \sup_{t > 0}
  \frac{1}{t} \ln \Bigl( 
  \frac{1}{L(t) \mu(\Omega) }\Bigr)\,, 
\end{equation}
where $L(t) = \|e^{-tA}\|_{1,\infty}$. This bound is actually weaker
than \eqref{eq:35}, at least on scalar operators. Indeed, by
\eqref{eq:3}, it holds that
\begin{align*}
  L(t) \mu(\Omega) & \geq \nu_{e^{-tA}}(\Omega) = \tau(\chi_\Omega
  e^{-tA} \chi_\Omega) \\
  & = \int_0^{+\infty} e^{-t\lambda} d F_\Omega(\lambda) \\
  & \geq \int_{[0,F_\Omega^{-1}(1)]} e^{-t\lambda} d \widetilde
  F_\Omega(\lambda)
\end{align*}
with $\widetilde F_\Omega (\lambda)= F_\Omega(\lambda)$ for $\lambda<
F_\Omega^{-1}(1)$ and $\widetilde F_\Omega(F_\Omega^{-1}(1)) =
1$. Notice that $0\leq d \widetilde F_\Omega \leq d F_\Omega$ since
$F_\Omega(F_\Omega^{-1}(1)) \leq 1 \leq F_\Omega(F_\Omega^{-1}(1)^+)$
by left continuity of $F_\Omega$.
Then by Jensen,
\begin{align*}
  -\ln \bigl(L(t) \mu(\Omega)\bigr) & \leq t
  \int_{[0,F_\Omega^{-1}(1)]} \lambda
  d\widetilde F_\Omega(\lambda) \\
  & = t \int_0^1 (1-F_\Omega(\lambda)) du = t \varphi_\Omega(1)\,,
\end{align*}
by \eqref{eq:26}.  This gives $\theta(\Omega) \leq \varphi_\Omega(1)$
as claimed.

\subsection{Proof of Corollary~\ref{cor:CLR}}
\label{sec:comments-corollary-CLR}

When $A$ is a positive operator, one has for $c \in [0,1]$,
\begin{align}
  \label{eq:37}
  \varphi_\Omega(y) & = \int_0^y F_\Omega^{-1}(u) du
  \geq \int_{cy}^y F_\Omega^{-1}(u) du \\
  & \geq (1-c) y F_\Omega^{-1}(c y)\,. \nonumber
\end{align}
Hence \eqref{eq:8}, that comes from \eqref{eq:7}, implies
\begin{displaymath}
  N_\Omega(\lambda) \leq \frac{1}{c} F_\Omega \bigl(
  \frac{\lambda}{1-c} \bigr) \,,
\end{displaymath}
giving \eqref{eq:9} in the case $c=1/2$. Unlike \eqref{eq:8} these
inequalities are not balanced. If $F_\Omega$ is a concave function,
one can sharpen \eqref{eq:37} into $N_\Omega(\lambda) \leq 2
F_\Omega(\lambda)$ by Jensen. When $F_\Omega(\lambda) / \lambda$ is
increasing, for instance when $F_\Omega$ is a convex function, one
sees easily that $N_\Omega(\lambda) \leq F_\Omega (2\lambda)$.

\section{The balanced Lieb--Thirring inequality}
\label{sec:lieb-thirr-ineq}

We now consider Theorem~\ref{thm:Lieb-Thirring} and begin with the
continuous case. The argument is an improvement of
\cite[\S3.2]{Rumin10}.

\subsection{Proof of $H(\rho) \leq \mathcal{E}(\rho)$}
\label{sec:proof-hrho-leq}

Let $\rho$ be a state, $\Omega$ any measurable set in $X$, and let
consider the splitting
\begin{displaymath}
  \rho^{1/2} \chi_\Omega = \rho^{1/2} \Pi_\lambda \chi_\Omega +
  \rho^{1/2} \Pi_{\geq \lambda} \chi_\Omega \,.
\end{displaymath}
Using Hilbert-Schmidt norm and assuming by homogeneity that
$\|\rho^{1/2}\|_\infty = \|\rho\|_\infty^{1/2}= 1$ yield
\begin{align*}
  \| \rho^{1/2} \chi_\Omega \|_{HS} & \leq \|\rho^{1/2} \Pi_\lambda
  \chi_\Omega\|_{HS} +
  \|\rho^{1/2} \Pi_{\geq \lambda} \chi_\Omega\|_{HS} \\
  & \leq \|\Pi_\lambda \chi_\Omega\|_{HS} + \|\rho^{1/2}
  \Pi_{\geq\lambda} \chi_\Omega\|_{HS} \,.
\end{align*}
Since $\|P\|_{HS}= \tau(P^* P )^{1/2} = \tau(P P^*)^{1/2}$, one finds
by \eqref{eq:1} that
\begin{equation}
  \label{eq:38}
  \nu_\rho(\Omega)^{1/2} \leq \nu_{\Pi_\lambda}(\Omega)^{1/2}
  + \nu_{\Pi_{\geq \lambda}\rho \Pi_{\geq \lambda}}(\Omega)^{1/2} \,.
\end{equation}

This implies a similar inequality almost everywhere at the local
level, i.e.
\begin{equation}
  \label{eq:39}
  D \nu_\rho(x)^{1/2} \leq F_x(\lambda)^{1/2} +
  D \nu_{\Pi_{\geq\lambda}\rho \Pi_{\geq \lambda}} (x)^{1/2} \,.
\end{equation}
Indeed, using \eqref{eq:38} on the sets
\begin{displaymath}
  \Omega_{a,b,c} = \{ x \in X \mid D \nu_\rho(x) \geq a^2 \ , \  
  F_x(\lambda) \leq b^2 \ \mathrm{and}\ D \nu_{
    \Pi_{>\lambda}\rho \Pi_{> \lambda}} (x) \leq c^2 \} 
\end{displaymath}
with $(a,b,c) \in D = \{a,b,c \in \Q^+ \mid a > b + c\}$, gives that
$\mu (\Omega_{a,b,c}) = 0$. Whence
\begin{displaymath}
  \{ x \in X \mid \mathrm{~\eqref{eq:39} \ fails}\}  = \bigcup_D \Omega_{a,b,c}
\end{displaymath}
is also negligible. The author is grateful to Guy David for suggesting
this level set argument.

\smallskip

We now suppose that $A$ is positive, and uses \eqref{eq:25},
\begin{align}
  \mathcal{E}(\rho) & = \int_0^{+\infty} \tau(\rho^{1/2}\Pi_{\geq
    \lambda} \rho^{1/2} ) d\lambda = \int_0^{+\infty} \tau(\Pi_{\geq
    \lambda} \rho
  \Pi_{\geq \lambda} ) d\lambda \nonumber \\
  & = \int_0^{+\infty} \nu_{\Pi_{\geq
      \lambda} \rho \Pi_{\geq \lambda}} (X) d\lambda \nonumber \\
  &= \int_{X \times \R^+ } D \nu_{\Pi_{\geq
      \lambda} \rho \Pi_{\geq \lambda}} (x) d\mu(x) d\lambda \nonumber \\
  & \geq \int_\Omega D \nu_{\Pi_{\geq \lambda} \rho \Pi_{\geq
      \lambda}} (x) d\mu(x) d\lambda\,,
  \label{eq:40}
\end{align}
where $\Omega = \{(x,\lambda) \in X \times \R^+ \mid F_x(\lambda) \leq
D\nu_\rho(x) \}$. Then, by \eqref{eq:39},
\begin{align*}
  \mathcal{E}(\rho) & \geq \int_\Omega \bigl( D\nu_\rho(x)^{1/2} -
  F_x(\lambda)^{1/2} \bigr)^2 d\mu(x) d\lambda  \\
  & = \int_X \psi_x (D \nu_\rho (x)) d\mu(x)
\end{align*}
with
\begin{equation}
  \label{eq:41}
  \psi_x (y)   = \int_0^{F_x^{-1}(y)} \bigl( y^{1/2} -
  F_x(\lambda)^{1/2} \bigr)^2 d\lambda \,.
\end{equation}

We shall compare this expression to $\varphi_x(y) = \int_0^y
F_x^{-1}(t) dt$. First, using
\begin{displaymath}
  \sqrt y - \sqrt u \geq \sqrt{ \frac{y}{2} - u} \quad \mathrm{for}
  \quad 0\leq u \leq \frac{y}{2} \,,
\end{displaymath}
and proceeding as in \eqref{eq:26}, one finds that
\begin{align*}
  \psi_x (y) & \geq \int_0^{F_x^{-1}(y/2)} \bigr( \frac{y}{2} -
  F_x(\lambda) \bigr) d\lambda  \\
  & = \int_0^{y/2} F_x^{-1}(t) dt = \varphi_x \bigl( \frac{y}{2}
  \bigr) \,,
\end{align*}
This shows the comparison \eqref{eq:14} claimed for positive
operators. For the general expression \eqref{eq:12}, one uses
\eqref{eq:41}
\begin{align*}
  \psi_x(y) & = \int_0^{F_x^{-1}(y)} \int_{F_x(\lambda)}^y
  \bigl(u^{1/2} -
  F_x(\lambda)^{1/2} \bigr) \frac{du}{\sqrt u} d\lambda \, \\
  & = \int_0^{F_x^{-1}(y)} \int_{F_x(\lambda)}^y
  \int_{F_x(\lambda)}^u \frac{dv du d\lambda}{2\sqrt{uv}} \\
  & = \int_{\{0 \leq F_x(\lambda) \leq v\leq u \leq y\}} \frac{dv du
    d\lambda}{2 \sqrt{uv}}\\
  & = \int_0^y \int_0^u \int_0^{F_x^{-1}(v)} d\lambda \frac{dv}{2
    \sqrt v} \frac{du}{\sqrt u} \\
  & = \int_0^y \int_0^u F_x^{-1}(v)\frac{dv}{2
    \sqrt v} \frac{du}{\sqrt u}   \\
  & = \int_0^y \int_0^1 F_x^{-1}(t^2u) dt du \\
  & = \int_0^1 \varphi_{x,t}(y) dt \,,
\end{align*}
with $\varphi_{x,t}(y) = \int_0^y F^{-1}_x(t^2 u) du$ as needed. This
shows that $H(\rho) \leq \mathcal{E}(\rho)$ for positive operators.

 \begin{rem}
   The inequality $H_{\Omega_i}(\rho) \leq \mathcal{E}(\rho)$ for
   partitions can be proved along the same lines; just replacing
   \eqref{eq:40} above by its discrete analogous
   \begin{displaymath}
     \mathcal{E}(\rho) \geq \sum_i\int_0^{F_{\Omega_i}^{-1}(\nu_\rho(\Omega_i))}
     \nu_{\Pi_\lambda \rho 
       \Pi_\lambda} (\Omega_i) d\lambda \,,
   \end{displaymath}
   and using \eqref{eq:38} in place of \eqref{eq:39}. Furthermore, the
   previous computations on $\varphi_x$ and $\psi_x$ apply on
   $\varphi_{\Omega_i}$ and $\psi_{\Omega_i}$ instead.
 \end{rem}

 The case of general (non-positive) operators can be handled as in
 \S\ref{sec:proof-theorem}; using the cut-off $A_k = \max (A,k)$ and
 energy shift in these balanced inequalities. From the positive case,
 one has
 \begin{displaymath}
   \int_X \int_0^{D \nu_\rho(x)} \int_0^1 \max (F_x^{-1}(t^2u),0) \, dt du
   d\mu(x) \leq   \mathcal{E}^+(\rho) < \infty \,,
 \end{displaymath}
 Hence $\mathcal{E}^+(\rho)$ controls the positive part of the
 integral $H(\rho)$.  Then taking $k \searrow -\infty$ yields the
 result: by dominated convergence for the positive part and monotone
 convergence for the negative one.

 \subsection{Behaviour of $H_{\Omega_i}$ under partition refinement }
 \label{sec:behaviour-h-under}

 We shall now prove that
 \begin{displaymath}
   H_{\Omega_i} \leq H_{\Omega'_i}\leq H
 \end{displaymath}
 if $\Omega_i'$ is a finer partition of $X$ than $\Omega_i$. This will
 actually follow by integration in $t\in ]0,1]$ of the parametric
 inequalities
 \begin{equation}
   \label{eq:42}
   H_{\Omega_i,t} \leq H_{\Omega_i',t} \leq H_t
 \end{equation}
 where
 \begin{displaymath}
   H_{\Omega_i,t}(\rho) = \|\rho\|_\infty  \sum_i \varphi_{\Omega_i,t} \Bigl(
   \frac{\nu_\rho(\Omega_i)}{\|\rho\|_\infty} \Bigr) \quad \mathrm{and
   } \quad H_t(\rho) = \|\rho\|_\infty  \int_X \varphi_{x,t} \Bigl(
   \frac{D\nu_\rho(x)}{\|\rho\|_\infty} \Bigr) d\mu(x)\,.
 \end{displaymath}

\begin{rem}
  When $t=1$, we have seen in \eqref{eq:15} that these expressions
  give energy lower bounds of collapsed states, and \eqref{eq:42}
  means they also behave like an information quantity; actually finer
  than the averaged $H$, but restricted to such states.
\end{rem}

We start with the discrete vs. continuous inequality, in the positive
case, i.e. $F_x(0) = 0$, and assume again that $\|\rho\|_\infty =
1$. Given $t>0$,
\begin{displaymath}
  \varphi_{x,t}(y) = \int_0^y F_x^{-1} (t^2 u) du \quad \mathrm{and}
  \quad \varphi_{\Omega_i,t}(y) = \int_0^y F_{\Omega_i}^{-1} (t^2 u) du
\end{displaymath}
are convex functions whose Legendre transforms are respectively
\begin{displaymath}
  \varphi_{x,t}^*(z) = \int_0^z F_x(v) \frac{dv}{t^2} \quad
  \mathrm{and} \quad \varphi_{\Omega_i,t}^*(z) = \int_0^z
  F_{\Omega_i} (v) \frac{dv}{t^2} \,.
\end{displaymath}
Young's inequality states that for any $y,z \geq 0$
\begin{displaymath}
  yz \leq \varphi_{x,t}(y) +  \varphi_{x,t}^*(z) \,.
\end{displaymath}
Integrating it over $\Omega_i$ with $y = D\nu_\rho(x)$ yields
\begin{align*}
  z \nu_\rho(\Omega_i) &\leq \int_{\Omega_i} \varphi_{t,x}
  (D\nu_\rho(x)) d\mu(x) + \int_{\Omega_i} \int_0^z F_x(v)
  \frac{dv}{t^2} d \mu(x)
  \\
  & = \int_{\Omega_i} \varphi_{x,t} (D\nu_\rho(x)) d\mu(x) +
  \varphi^*_{\Omega_i,t} (z) \,,
\end{align*}
by Fubini and \eqref{eq:5}.  Then by Legendre duality, one has
\begin{equation}
  \label{eq:43}
  \varphi_{\Omega_i,t}(\nu_\rho(\Omega_i)) = \sup_{z\geq 0} \bigl( z
  \nu_\rho(\Omega_i) - \varphi^*_{\Omega_i,t}(z) \bigr) \leq
  \int_{\Omega_i} \varphi_{x,t} 
  (D\nu_\rho(x)) d\mu(x)\,. 
\end{equation}
This gives $H_{\Omega_i,t}(\rho) \leq H_t(\rho)$ by summation. The
discrete comparison $H_{\Omega_i} (\rho)\leq H_{\Omega'_i}(\rho)$
follows the same lines: just replacing the integration over $\Omega_i$
above by the discrete splitting of $\Omega_i$ into smaller
$\Omega'_j$.

\smallskip

We now consider the general (non-positive) situation. From
\S\ref{sec:proof-hrho-leq}, the positive parts of $H_t(\Omega)$ and
$H_{\Omega_i,t}(\rho)$ are finite if $\mathcal{E}^+(\rho)$
is. Moreover we shall assume that the negative part of
$H_{\Omega_i,t}(\rho)$ is finite, or \eqref{eq:42} is already
satisfied. This implies in particular that $F_{\Omega_i}^{-1}(u) >
-\infty$ for any $i$ and $u>0$, and thus the functions
$F_{\Omega_i}(\lambda) = \int_{\Omega_i} F_x(\lambda) d\mu(x) \searrow
0$ when $\lambda \searrow -\infty$. Whence, fixing an $i$, one has
a.e. in $\Omega_i$ that $F_x(\lambda) \searrow 0$ when $\lambda
\searrow -\infty$. We shall now apply \eqref{eq:43} to
\begin{displaymath}
  F_{k,x}(\lambda) = F_x(\lambda+k) - F_x(k) \quad \mathrm{and}
  \quad F_{k,\Omega_i}(\lambda) = F_{\Omega_i}(\lambda + k) -
  F_{\Omega_i}(k) \,.
\end{displaymath}
This gives
\begin{displaymath}
  F_{k,x}^{-1} (u) = F_x^{-1}(u +
  F_x(k))) - k \quad \mathrm{and} \quad
  F_{k,\Omega_i}^{-1} (u) = F_{\Omega_i}^{-1}(u +
  F_{\Omega_i}(k))) - k \,,
\end{displaymath}
and
\begin{displaymath}
  \int_0^{\nu_\rho(\Omega_i)} F_{\Omega_i}^{-1}(t^2 u+ F_{\Omega_i}^{-1}(k)) du \leq
  \int_{\Omega_i} \int_0^{D\nu_\rho(x)} F_x^{-1}(t^2 u + F_x(k)) du d\mu(x)\,,
\end{displaymath}
leading to the result for $k \searrow -\infty$.

\subsection{Illustration in $\R^n$}
\label{sec:illustration-rn-1}

We consider again the case of the Laplacian on $\R^n$. From
\eqref{eq:30}, one has
\begin{displaymath}
  F_n^{-1}(u) = C_n^{-2/n} u^{2/n} = 4\pi \Gamma( 1+ n/2)^{2/n} u^{2/n}\,,
\end{displaymath}
giving
\begin{displaymath}
  \psi_n(y)  = \int_0^1\int_0^y F_n^{-1}(t^2 u )dudt = D_n y^{1+2/n} \,,
\end{displaymath}
with
\begin{displaymath}
  D_n = \frac{4\pi}{(1+4/n)(1+2/n)} \Gamma( 1+ n/2)^{2/n} \,.
\end{displaymath}
Thus, if $\rho$ is a projection onto a $N$-dimensional space of
orthonormal basis $f_i$, \eqref{eq:10} reads
\begin{equation}
  \label{eq:44}
  D_n \int_{\R^n} \Bigl( \sum_{i=1}^N |f_i(x)|^2 \Bigr)^{1+2/n} dx
  \leq \sum_{i=1}^N \| \nabla f_i\|_2^2 \,.  
\end{equation}
Such lower bound of the kinetic energy is due to Lieb--Thirring, see
\cite[Thm.~12.5]{Lieb-Loss} or \cite{Lieb-Thirring}, and have
applications in quantum mechanics. Notice that similar bounds can also
be obtained from \eqref{eq:10} for $|\nabla| = \Delta^{1/2}$ or the
relativistic kinetic energy $P = (\Delta + m^2)^{1/2} -m$ (see
\cite{Lieb-Loss}); replacing $F_n^{-1}$ above by $F_{|\nabla|}^{-1} =
(F_n^{-1})^{1/2}$ or $F_{P}^{-1}(\lambda) = (F_n^{-1} + m^2)^{1/2}
-m$.

The constant $D_n$ given here is quite sharp for large $n$. Indeed, by
\cite[\S12.5]{Lieb-Loss}, the (unknown) best constant has to be
smaller than $B_n= (1+4/n) D_n$. This follows from the remark that
\begin{displaymath}
  \varphi_n(y) = \int_0^y F_n^{-1}(u) du =  B_n y^{1 +2/n} \,.
\end{displaymath}
Indeed by Jensen inequality (or \eqref{eq:42}) and Berezin-Li-Yau
inequality \eqref{eq:32} one has both
\begin{displaymath}
  \varphi_{n,\Omega}(N) = \mu(\Omega)^{-2/n} \varphi_n (N) \leq 
  B_n \int_\Omega \Bigl(  \sum_{i=1}^N 
  |f_i(x)|^2 \Bigr)^{1+2/n} dx 
  \ \mathrm{and}\ \sum_{i=1}^N \| \nabla f_i\|_2^2 \,,
\end{displaymath}
for functions confined in a domain $\Omega$. As the second inequality
is sharp in the semiclassical limit $N\rightarrow +\infty$, the best
constant in \eqref{eq:44} is smaller than $B_n$ as claimed.

\section{Entropy bounds}
\label{sec:entropy-bounds}

\subsection{Proof of Theorem~\ref{thm:entropies}}
\label{sec:proof-theor-refthm}

We deduce the inequality between the spatial and spectral entropies
from Theorem~\ref{thm:entropies}.  Consider the functions
\begin{displaymath}
  F_A(\lambda)= \sup_{A,\Omega} \frac{F_{A,\Omega}(\lambda)}{\mu(\Omega)} =
  \supess_x F_{A,x}(\lambda) \quad \mathrm{and} \quad
  F_A^+(\lambda) = 
  \lim_{\varepsilon \rightarrow 0^+} F_A(\lambda + \varepsilon) \,.
\end{displaymath}
We observe that $F_A$ is increasing and left continuous, since the
$F_{A,\Omega}$ are, while $F_A^+ $ is right continuous. We shall
assume that $F_A(\lambda)$ is finite for $\lambda \ll 0$, in order
that the hypothesis of Theorem~\ref{thm:entropies} hold for some
state. This implies in particular by dominated convergence that
$F_A(\lambda) \searrow 0$ when $\lambda \searrow -\infty$. Then by
\eqref{eq:16}, one has
\begin{displaymath}
  F_{\ln F_A^+(A)} (\lambda) \leq F_A \circ (F_A^+)^{-1} 
  (e^\lambda ) \leq F_A \circ F_A^{-1}(e^\lambda) \leq e^\lambda ,
\end{displaymath}
by left continuity of $F_A$. Hence by \eqref{eq:12}, it holds a.e. in
$X$ that
\begin{equation}
  \label{eq:45}
  \psi_{\ln F_A^+(A),x} (y) \geq \int_0^1\int_0^y \ln(t^2u) du dt = y
  \ln y -3y \,,
\end{equation}
leading to Theorem~\ref{thm:entropies}.

\subsection{Illustration on $\R^n$}
\label{sec:illustration-rn-2}

We make explicit Theorem~\ref{thm:entropies} in the case of the
Laplacian on $\R^n$. Given a state $\rho$, we can express its spectral
entropy $S_\lambda(\rho)$ using Fourier transform. Suppose that $\rho
= \sum_i p_i \Pi_{f_i}$ for orthonormal functions $f_i$. Its Fourier
transform $\widehat \rho$ acts on $L^2(\R^n_\xi)$ by $\widehat \rho
(\widehat f) = \widehat{\rho (f)}$; actually $\widehat \rho = \sum_i
p_i \Pi_{\widehat{ f_i}}$ using the Plancherel measure $d^* \xi
=(2\pi)^{-n} d \xi$. At the density level, this writes
\begin{equation}
  \label{eq:46}
  d \nu_\rho(x) = \sum_i p_i |f_i(x)|^2 dx \quad \mathrm{and} \quad
  d\nu_{\widehat \rho}( \xi) = \sum_i p_i |\widehat{f_i}(\xi)|^2 d^* \xi\,.  
\end{equation}

By \eqref{eq:30}, $F_n(\lambda) = C_n \lambda^{n/2}$ and the spectral
entropy is
\begin{align*}
  S_\lambda(\rho) & = \tau (\ln (F_n(\Delta)) \rho) = \sum_i p_i
  \langle \ln (C_n \Delta^{n/2}) f_i ,
  f_i \rangle \\
  & = \sum_i p_i \int_{\R^n} \ln (C_n \|\xi\|^n ) | \widehat{
    f_i}(\xi)| ^2 d^* \xi
  \\
  & = \int_{\R^n} \ln (\mathrm{vol}^* (B_n(0, \|\xi\|)) d\nu_{\widehat
    \rho}( \xi) \,.
\end{align*}
Hence the entropy bound \eqref{eq:17} writes here
\begin{equation}
  \label{eq:47}
  \int_{\R^n} \ln \bigl( \frac{d \nu_\rho}{dx} \bigr) d \nu_\rho(x)  \leq 
  \int_{\R^n} \ln (\mathrm{vol}^*  (B_n(0, \|\xi\|))
  d\nu_{\widehat \rho}( \xi) + \tau(\rho) (3+ \ln \|\rho\|_\infty) \,. 
\end{equation}

To study the general sharpness of this bound, we first observe it
implies a log-Sobolev inequality. Indeed, Jensen inequality yields
\begin{align*}
  \int_{\R^n} \ln \bigl( \frac{d \nu_\rho}{dx} \bigr) \frac{d
    \nu_\rho(x)}{ \tau(\rho)} & \leq \ln F_n \Bigl( \int_{\R^n}
  \|\xi\|^2 \frac{d\nu_{\widehat \rho}( \xi)}{\tau(\rho)} \Bigr) +
  3+ \ln \|\rho\|_\infty \\
  & = \ln F_n \Bigl( \frac{\mathcal{E}(\rho)}{\tau(\rho)} \Bigr) + 3+
  \ln\|\rho\|_\infty \,.
\end{align*}
This in turn implies a Berezin--Li--Yau type inequality for confined
states in finite measure sets $\Omega$. Namely the convexity of
$y\mapsto y \ln y$ leads to
\begin{equation}
  \label{eq:48}
  \tau(\rho) \leq \mu(\Omega) \|\rho\|_\infty e^3 F_n \Bigl(
  \frac{\mathcal{E}(\rho)}{\tau(\rho)} \Bigr) \,. 
\end{equation}
This may be compared to \eqref{eq:6} where $\varphi_\Omega (y) =
\frac{n}{n+2} y F_n^{-1}(\frac{y}{\mu(\Omega)})$ gives
\begin{displaymath}
  \tau(\rho) \leq \mu(\Omega) \|\rho\|_\infty F_n \Bigl( \frac{n+2}{n}
  \frac{\mathcal{E}(\rho)}{\tau(\rho)} \Bigr) \,.
\end{displaymath}
As recalled in \S\ref{sec:illustration-rn} (and also
\S\ref{sec:asympt-sharpn-amen} in a discrete setting) this inequality
is sharp for all $n$, in the semiclassical limit of large energy. It
is indeed sharper than \eqref{eq:48}, since
\begin{displaymath}
  F_n \bigl( \frac{n+2}{n}
  \lambda \bigr) = C_n \Bigl( \frac{n+2}{n}
  \lambda \Bigr)^{n/2} \leq e F_n(\lambda) \,.
\end{displaymath}
As a consequence, the inequality \eqref{eq:47} is sharp except
possibly for the constant $3$ there, which can't be taken smaller than
$1$ in this generality.

\begin{rem}
  We notice that \eqref{eq:48}, with $e$ instead of $e^3$, is also an
  instance of the general confined states result
  Theorem~\ref{thm:CLR}. Indeed when applied to $\ln F_A^+(A)$, one
  can use
  \begin{displaymath}
    \varphi_{\ln F_A^+(A), \Omega} (y) \geq \int_0^y \ln \bigl(
    \frac{u}{\mu(\Omega)} \bigr) du = y \ln \bigl(
    \frac{y}{\mu(\Omega)} \bigr)
    - y \,,
  \end{displaymath}
  instead of the weaker (not confined) $\psi$ version \eqref{eq:45}.
\end{rem}

\subsection{Proof of Theorem~\ref{thm:Fourier-entropies}}
\label{sec:proof-theorem-fourier}

The right spectral term of the previous entropy bound \eqref{eq:47} is
associated to the level sets of the symbol $\sigma_\Delta(\xi) =
\|\xi\|^2$ of the Laplacian; namely at some point $\xi_0$, one has
$B_n (0, \| \xi_0\|)) = \{\xi \mid \sigma_\Delta(\xi) \leq
\sigma_\Delta(\xi_0)\}$, whose volume gives the spectral density
$F_\Delta(\lambda)$ at the energy $\lambda= \|\xi_0\|^2$. Given a
state $\rho$, one can consider more general translation invariant
operators, associated to other fillings of $\R^n_\xi$, in order to
minimize the spectral entropy term $S_\lambda(\rho)$. We shall proceed
as follows.

Let $\sigma$ be a measurable bounded function on $\R^n_\xi$, and
$A_\sigma$ be defined by
\begin{displaymath}
  \widehat{A_\sigma(f)} (\xi) = \sigma(\xi) \widehat f(\xi)\,.
\end{displaymath}
Let $\Omega^\sigma_\lambda= \{\xi \in \R^n \mid \sigma(\xi) \leq
\lambda\}$. The spectral projection $\Pi_{A_\sigma}(\lambda)$ acts
through Fourier transform by multiplication by $\chi_{\Omega_\lambda}$
and, following e.g. \cite[\S4.1, \S4.2]{Rumin10}, the spectral density
of $A_\sigma$ is
\begin{displaymath}
  F_{A_\sigma}^+(\lambda) = \|k_{\Pi_{A_\sigma}(\lambda)}\|_{L^2_x}^2 =
  \|\chi_{\Omega_\lambda^\sigma}\|_{L^2_\xi}^2 = \mathrm{vol}^* (
  \Omega_\lambda^\sigma) \,.
\end{displaymath}
This leads to the following expression for the spectral entropy of a
state $\rho$, as long these integral have finite positive parts,
\begin{align*}
  S_{A_\sigma}(\rho) & = \tau (\ln F_{A_\sigma}^+(A_\sigma)\rho) =
  \int_\R \ln (\mathrm{vol}^* ( \Omega^\sigma_\lambda)) d \tau
  (\Pi_{A_\sigma}(\lambda) \rho)
  \\
  & = \int_\R \ln (\mathrm{vol}^* ( \Omega_\lambda^\sigma)) d
  \tau(\chi_{\Omega_\lambda^\sigma} \widehat \rho)
  \\
  & = \int_\R \ln (\mathrm{vol}^* ( \Omega_\lambda^\sigma)) d
  \nu_{\widehat \rho}(\Omega_\lambda^\sigma)
  \\
  & = \int_\R \ln (\mathrm{vol}^* ( \Omega_\lambda^\sigma)) d
  (\sigma_*( \nu_{\widehat\rho})(]-\infty,\lambda]))\,,
\end{align*}
using the push-forward measure $\sigma_*(\nu_{\widehat \rho})$. This
yields
\begin{equation}
  \label{eq:49}
  S_{A_\sigma}(\rho)  = \int_{\R^n} \ln (\mathrm{vol}^* (
  \Omega_{\sigma(\xi)}^\sigma))
  d\nu_{\widehat\rho}(\xi) \,.
\end{equation}
The strong functional invariance of this entropy is clear here. It
stays unchanged if replacing the symbol $\sigma$ into $f(\sigma)$ for
any strictly increasing function $f$ on $\sigma(\R^n)$, as comes from
$\Omega^{f(\sigma)}_{f(\sigma)(\xi)} =
\Omega^\sigma_{\sigma(\xi)}$. The following statement gives the
minimum of these quantities and implies
Theorem~\ref{thm:Fourier-entropies}.

\begin{prop}
  \label{prop:bathtub}
  Given $\rho$, let $g = \frac{d \nu_{\widehat \rho}}{d^*\xi}$ and
  $F_{\widehat\rho} (y) =\mathrm{vol}^* (\{\xi \mid g(\xi) > y\}) $ as
  in \eqref{eq:20}. Then one has
  \begin{displaymath}
    S_{A_\sigma}(\rho) \geq S_F(\widehat \rho) -\tau(\rho) =\int_0^{+\infty}  
    F_{\widehat\rho} (y)  \ln 
    F_{\widehat\rho} (y) dy - \tau(\rho) \,. 
  \end{displaymath}
  Equality holds if $\sigma$ is a decreasing \emph{regular filling} of
  the level sets of $g$ in the following sense:
  \begin{itemize}
  \item for each $y$, there exists $\lambda$ such that
    \begin{displaymath}
      \{ \xi \mid
      g(\xi) > y \} \subset \Omega_\lambda^\sigma = \{ \xi \mid
      \sigma(\xi) \leq \lambda\}  
      \subset \{ \xi \mid
      g(\xi) \geq y \} ;
    \end{displaymath}
  \item for all $\lambda$, $\mathrm{vol}^* (\sigma^{-1}(\lambda)) =
    0$.
  \end{itemize}
\end{prop}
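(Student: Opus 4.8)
The plan is to read Proposition~\ref{prop:bathtub} as a bathtub (rearrangement) estimate for the function $h(\xi)=\mathrm{vol}^*(\Omega^\sigma_{\sigma(\xi)})$ occurring in formula \eqref{eq:49}, so that $S_{A_\sigma}(\rho)=\int_{\R^n}\ln h(\xi)\,d\nu_{\widehat\rho}(\xi)$. The crucial first step is the volume bound
\begin{equation*}
  \mathrm{vol}^*\bigl(\{\xi\mid h(\xi)\le t\}\bigr)\le t\qquad (t\ge 0).
\end{equation*}
To prove it, write $h=m\circ\sigma$ with $m(\lambda)=\mathrm{vol}^*(\{\sigma\le\lambda\})$, which is nondecreasing and right continuous. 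Then $\{h\le t\}=\sigma^{-1}(\{m\le t\})$, and $\{m\le t\}$, being a lower set of $\R$, equals $(-\infty,\lambda_t]$ or $(-\infty,\lambda_t)$ with $\lambda_t=\sup\{\lambda\mid m(\lambda)\le t\}$. In the closed case $\mathrm{vol}^*(\{h\le t\})=m(\lambda_t)\le t$; in the open case $\mathrm{vol}^*(\{h\le t\})=\mathrm{vol}^*(\{\sigma<\lambda_t\})=\sup_{\lambda<\lambda_t}m(\lambda)\le t$ by continuity from below of the measure (the degenerate cases $\lambda_t=\pm\infty$ being immediate).

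Next I would localize at each level of $g=d\nu_{\widehat\rho}/d^*\xi$. Fix $y>0$ and put $E_y=\{g>y\}$, so by Chebyshev $\mathrm{vol}^*(E_y)=F_{\widehat\rho}(y)\le\tau(\rho)/y<\infty$. Since $\{\xi\in E_y\mid h(\xi)\le t\}$ sits inside both $\{h\le t\}$ and $E_y$, the previous step gives $\mathrm{vol}^*(\{h\le t\}\cap E_y)\le\min(t,F_{\widehat\rho}(y))$, hence $\mathrm{vol}^*(\{h>t\}\cap E_y)\ge(F_{\widehat\rho}(y)-t)_+$. Substituting the layer-cake identity $\ln a=\int_1^\infty\chi_{\{t<a\}}\,\frac{dt}{t}-\int_0^1\chi_{\{t\ge a\}}\,\frac{dt}{t}$ and applying Tonelli to each part,
\begin{equation*}
  \int_{E_y}\ln h\,d^*\xi=\int_1^\infty\frac{\mathrm{vol}^*(\{h>t\}\cap E_y)}{t}\,dt-\int_0^1\frac{\mathrm{vol}^*(\{h\le t\}\cap E_y)}{t}\,dt\ \ge\ \int_0^{F_{\widehat\rho}(y)}\ln t\,dt=F_{\widehat\rho}(y)\bigl(\ln F_{\widehat\rho}(y)-1\bigr),
\end{equation*}
the last step being a one-line check distinguishing $F_{\widehat\rho}(y)\le 1$ from $F_{\widehat\rho}(y)>1$.

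Then I would integrate over $y$: writing $d\nu_{\widehat\rho}=g\,d^*\xi$ and $g(\xi)=\int_0^\infty\chi_{\{g(\xi)>y\}}\,dy$ and using Tonelli on the positive and negative parts of $\ln h$, one gets $S_{A_\sigma}(\rho)=\int_0^\infty\bigl(\int_{E_y}\ln h\,d^*\xi\bigr)dy\ge\int_0^\infty F_{\widehat\rho}(y)(\ln F_{\widehat\rho}(y)-1)\,dy=S_F(\widehat\rho)-\tau(\rho)$, since $\int_0^\infty F_{\widehat\rho}(y)\,dy=\int_{\R^n}g\,d^*\xi=\nu_{\widehat\rho}(\R^n)=\tau(\widehat\rho)=\tau(\rho)$. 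The finiteness of $S_F(\widehat\rho)^+$ assumed in Theorem~\ref{thm:Fourier-entropies}, together with $\tau(\rho)<\infty$, is exactly what keeps all these integrals unambiguous despite the change of sign of $\ln$.

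It remains to treat equality. For a decreasing regular filling the first bullet sandwiches every $\Omega^\sigma_\lambda$ between $\{g>y\}$ and $\{g\ge y\}$ for some $y=y(\lambda)$, so $\Omega^\sigma_\lambda$ coincides, up to $\mathrm{vol}^*$-null sets, with the superlevel set $\{g>y\}$ except possibly on the plateau $\{g=y\}$, and the second bullet $\mathrm{vol}^*(\sigma^{-1}(\lambda))=0$ forces $\sigma$ to sweep such a plateau continuously. Consequently, for every $y$ the map $h$ pushes $d^*\xi$ restricted to $E_y$ forward to Lebesgue measure on $(0,F_{\widehat\rho}(y)]$, so $\mathrm{vol}^*(\{h\le t\}\cap E_y)=\min(t,F_{\widehat\rho}(y))$ for a.e. $t$, and all the inequalities above become equalities. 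I expect this last verification — the bathtub bookkeeping across positive-measure plateaus of $g$, together with keeping track of the one-sided continuity conventions for $m$ and $F_{\widehat\rho}$ — to be the only genuinely delicate point; once the volume bound is in hand, the inequality itself is routine.
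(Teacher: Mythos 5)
Your argument is correct, and it reaches the conclusion by a genuinely different route than the paper. The common core is the volume bound: your $\mathrm{vol}^*(\{h\le t\})\le t$ is exactly the paper's estimate $\mathrm{vol}^*(D^\sigma_y)\le y$ for $D^\sigma_y=\{\xi\mid \mathrm{vol}^*(\Omega^\sigma_{\sigma(\xi)})\le y\}$, proved there by the same sandwich $\{\sigma<\lambda\}\subset D_y^\sigma\subset\Omega^\sigma_\lambda$. After that the two proofs diverge. The paper works with the distribution function $F_\sigma(z)=v_*(\nu_{\widehat\rho})(]0,z])$ of the push-forward of the $g$-weighted measure under $h$, writes $S_{A_\sigma}(\rho)=\int\ln y\,dF_\sigma(y)$, bounds $F_\sigma$ from above by the bathtub principle of Lieb--Loss (with an explicit error identity), identifies the extremal bound as $F(y)=\int_0^yF_{\widehat\rho}^{-1}(v)\,dv$, and finishes with Fubini manipulations involving the pseudo-inverse. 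You instead perform a double layer-cake decomposition — slicing $g$ into the unweighted superlevel sets $E_y$ and slicing $\ln h$ in $t$ — so that everything reduces to the elementary estimate $\mathrm{vol}^*(\{h\le t\}\cap E_y)\le\min(t,F_{\widehat\rho}(y))$ and the one-line computation $\int_0^a\ln t\,dt=a(\ln a-1)$; no bathtub citation and no $F_{\widehat\rho}^{-1}$ appear. What each buys: your version is more self-contained and the equality case is read off slice by slice; the paper's explicit error identity for \eqref{eq:51} gives the equality criterion directly in terms of $D^\sigma_y$, together with the monotonicity remark ($F_{\sigma_1}\le F_{\sigma_2}$ forces $S_{A_{\sigma_1}}\ge S_{A_{\sigma_2}}$) which organizes the minimization. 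One detail worth writing out in your equality step: for $t<F_{\widehat\rho}(y)$ the inclusion $\{h\le t\}\subset E_y$ (up to null sets) is not automatic from the first bullet applied at level $y$ alone, since that only places $\{h\le t\}$ inside $\{g\ge y\}$, which may exceed $E_y$ on a plateau of $g$; it follows either by applying the bullet at levels $y'>y$ and using right continuity of $F_{\widehat\rho}$ together with the nestedness of the sublevel sets of $\sigma$, or by discarding the (countably many, hence Lebesgue-null in $y$) plateau values before integrating in $y$. With that line added, your proof is complete and at the same level of rigour as the paper's.
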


Equivalently, the level sets $\Omega_\lambda^\sigma$ of a regular
filling $\sigma$ are the sets $\{ \xi \mid g(\xi) \geq y \}$ (up to
zero measure) for regular values of $\rho$, i.e. when
$\mathrm{vol}^*(g^{-1}(y)) = 0$, while on $g^{-1}(y)$ for the
(discrete) non-regular values, they interpolate continuously in
measure between $\{ \xi \mid g(\xi) > y \}$ and $\{ \xi \mid g(\xi)
\geq y \}$. This can be achieved since the measure has no atom.

\smallskip

From Proposition~\ref{prop:bathtub}, we notice that the use of the
Laplacian is already optimal to minimize the spectral entropy of
states with spherical density; Theorems~\ref{thm:entropies}
and~\ref{thm:Fourier-entropies} are equivalent on such states. On
anisotropic states, one advantage of \eqref{eq:21} over \eqref{eq:47}
lies in its stronger invariance through general linear transforms
$f(x) \mapsto f(Ax)$ and $\rho \mapsto \rho_A = A \rho A^{-1}$.  In
such cases, one checks easily that
\begin{displaymath}
  D \nu_{\rho_A}(x) = |\det A | D\nu_{\rho}(Ax) \quad \mathrm{while}
  \quad D \nu_{\widehat{\rho_A }}(\xi) = |\det A|^{-1} D
  \nu_{\widehat\rho}(^tA^{-1}\xi) \,,
\end{displaymath}
giving that
\begin{displaymath}
  S_x(\rho_A) = S_x(\rho) - \tau(\rho) \ln |\det A| \quad\mathrm{
    while} \quad
  S_F(\widehat \rho_A) = S_F(\widehat \rho) + \tau(\rho) \ln |\det A|\,,
\end{displaymath}
which keeps the entropy sum unchanged in \eqref{eq:47}.

\begin{proof}[Proof of Proposition~\ref{prop:bathtub}]
  Let $v(\xi) = \mathrm{vol}^*(\Omega_{\sigma(\xi)}^\sigma)$ and
  $F_\sigma(z) = v_*(\nu_{\widehat \rho})(]0, z])$. Then by
  \eqref{eq:49}
  \begin{equation}
    \label{eq:50}
    S_{A_\sigma}(\rho) = \int_0^{+\infty} \ln y d F_\sigma(y) = -\int_0^1
    F_\sigma(y) \frac{dy}{y} + \int_1^{+\infty} (\tau(\rho) -
    F_\sigma(y)) \frac{dy}{y} \,,
  \end{equation}
  by Fubini, since $ F_\sigma(+\infty)= \tau(\widehat \rho) =
  \tau(\rho)$. Hence $S_{A_{\sigma_1}}(\rho) \geq
  S_{A_{\sigma_2}}(\rho)$ if $F_{\sigma_1} \leq F_{\sigma_2}$, and we
  have to look for upper bounds for $F_\sigma$ to minimize
  $S_{A_\sigma}(\rho)$.

  By definition, one has
  \begin{displaymath}
    F_\sigma(y)  = \int_{D_y^\sigma} g(\xi) d^*\xi  \quad
    \mathrm{with} \quad D_y^\sigma = \{\xi
    \mid \mathrm{vol^*} (\Omega^\sigma_{\sigma(\xi)}) \leq y\} \,.
  \end{displaymath}
  Clearly one has $\{\xi \mid \sigma(\xi) < \lambda\} \subset
  D_y^\sigma \subset \Omega^\sigma_\lambda$ where $\lambda =
  \sup_{D_y^\sigma} \sigma$, and thus $\mathrm{vol}^*(D_y^\sigma) \leq
  y$ with equality if $\mathrm{vol}^*(\sigma^{-1}(\lambda)) =
  0$. Hence by the 'bathtub principle' (see
  \cite[Theorem~12.3]{Lieb-Loss}) one has
  \begin{equation}
    \label{eq:51}
    F_\sigma(y) \leq F(y) = \int_{\{g > F_{\widehat \rho}^{-1}(y)\}} g(\xi)
    d^*\xi + F_{\widehat \rho}^{-1}(y) 
    (y - F_{\widehat \rho}(F_{\widehat \rho}^{-1}(y)))\,,
  \end{equation}
  with $F_{\widehat \rho}^{-1}(y) = \inf \{z \mid F_{\widehat \rho}(z)
  \leq y\}$. Indeed, this comes from the identity
  \begin{multline*}
    F_\sigma(y) - F(y) = \int_{D_y^\sigma \cap \{g\leq F_{\widehat
        \rho}^{-1}(y)\}} (g (\xi)- F_{\widehat \rho}^{-1}(y)) d^*\xi
    \\ - \int_{(D_y^\sigma)^c \cap \{g > F_{\widehat \rho}^{-1}(y)\}}
    (g( \xi) - F_{\widehat \rho}^{-1}(y))d^*\xi + F_{\widehat
      \rho}^{-1}(y) (\mathrm{vol}^*(D^\sigma_y) -y) \,.
  \end{multline*}
  Moreover this shows that equality holds in \eqref{eq:51} if
  $\mathrm{vol}^*(D_y^\sigma) = y$ and, up to zero measure sets, $ \{g
  > F_{\widehat \rho}^{-1}(y) \} \subset D_y^\sigma \subset \{g \geq
  F_{\widehat \rho}^{-1}(y) \}$; which is fulfilled for regular
  fillings by the discussion above.
  
  We rewrite the function $F(y)$ in a more convenient form. Since
  $\tau(\rho) = \tau( \widehat \rho) = \int_{\R^n} g(\xi) d^*\xi$, one
  has
  \begin{align*}
    \int_{\{g > F_{\widehat \rho}^{-1}(y)\}} g(\xi) d^*\xi & =
    \tau(\rho) - \int_{\{g \leq F_{\widehat \rho}^{-1}(y)\}} g(\xi)
    d^*
    \xi \\
    & = \tau(\rho) - \int_{\{0\leq u < g(\xi) \leq F_{\widehat
        \rho}^{-1}(y) \}}
    du d^* \xi \\
    & = \tau (\rho) - \int_0^{F_{\widehat \rho}^{-1}(y)} (F_{\widehat
      \rho}(u) - F_{\widehat \rho}(F_{\widehat \rho}^{-1}(y)) du \\
    & = \tau (\rho) - \int_0^{F_{\widehat \rho}^{-1}(y)} F_{\widehat
      \rho}(u) du + F_{\widehat \rho}^{-1}(y) F_{\widehat
      \rho}(F_{\widehat \rho}^{-1}(y))\,.
  \end{align*}
  Then by \eqref{eq:51},
  \begin{align*}
    F(y) & = \tau(\rho) - \int_0^{F_{\widehat \rho}^{-1}(y)}
    (F_{\widehat
      \rho}(u) - y) du \\
    & = \tau(\rho) - \int_{\{ y < v < F_{\widehat \rho}(u) \}} dv du
    \,,
  \end{align*}
  since by right continuity of $F_{\widehat \rho}$, one has $u <
  F_{\widehat \rho}^{-1}(y)$ iff $F_{\widehat \rho} (u) >
  y$. Therefore
  \begin{equation}
    F(y) = \tau(\rho) - \int_y^{+\infty} F_{\widehat \rho}^{-1}(v)
    dv  = \int_0^y F_{\widehat \rho}^{-1}(v) dv 
    \label{eq:52}\,,
  \end{equation}
  since
  \begin{align*}
    \tau(\rho) & = \int_{\R^n} g(\xi) d^*\xi = \int_{\{0 \leq u <
      g(\xi)\}} du d^*\xi = \int_0^{+\infty} F_{\widehat \rho} (u) du
    \\
    & = \int_{\{ 0 \leq v < F_{\widehat \rho}(u)\}} dv du =
    \int_0^{+\infty} F_{\widehat \rho}^{-1}(v) dv\,.
  \end{align*}
  Finally, \eqref{eq:50} and \eqref{eq:52} lead to
  \begin{align*}
    S_{A_\sigma} (\rho) & \geq \int_0^{+\infty} \ln y F_{\widehat
      \rho}^{-1}(y)
    dy  = \int_{\{0 < z < F_{\widehat \rho}^{-1}(y) \}} \ln y dz dy \\
    & = \int_{\{0 < y < F_{\widehat \rho}(z)\} } \ln y dy dz \\
    & = \int_0^{+\infty} F_{\widehat \rho}(z) (\ln F_{\widehat
      \rho}(z) -1) dz = S_F(\widehat \rho) - \tau(\rho) \,,
  \end{align*}
  as claimed in Proposition~\ref{prop:bathtub}.
\end{proof}

\subsection{Proof of Corollary~\ref{cor:Fourier-entropies}}
\label{sec:comments-theorem}

We deduce Corollary~\ref{cor:Fourier-entropies} from
Theorem~\ref{thm:Fourier-entropies}.  This relies on the following
entropy comparison:
\begin{equation}
  \label{eq:53}
  S_F(\widehat \rho) \leq  S_\xi (\widehat \rho) +  \tau(\rho) (1 +
  \ln \tau(\rho))\,. 
\end{equation}
Indeed, one has
\begin{align*}
  -S_\xi(\widehat \rho) - \tau(\rho) & = \int_{\R^n} \ln \Bigl(
  \frac{d \nu_{\widehat \rho}}{d^* \xi} \Bigr) d \nu_{\widehat \rho}
  - \tau(\widehat \rho) \\
  & = \int_{\{0< y< \frac{d \nu_{\widehat \rho}}{d^*\xi}\}} \ln y dy
  d^* \xi \\
  & = \int_0^{+\infty} F_{\widehat \rho}(y) \ln y dy\,,
\end{align*}
thus
\begin{align*}
  S_F(\widehat \rho) - S_\xi(\widehat \rho) - \tau(\rho) & =
  \int_0^{+\infty} \ln (y
  F_{\widehat \rho}(y)) F_{\widehat \rho}(y) dy \\
  & \leq \int_0^{+\infty} \ln (\tau(\rho)) F_{\widehat \rho}(y) dy =
  \tau(\rho) \ln \tau(\rho)\,,
\end{align*}
since $y F_{\widehat \rho}(y) = y \mathrm{vol}^* \{ \xi \mid \frac{d
  \nu_{\widehat \rho}}{d^* \xi}(\xi) > y \} \leq \int_{\R^n} \frac{d
  \nu_{\widehat \rho}}{d^* \xi} d^*\xi = \tau (\rho)$. Then,
\eqref{eq:21} and \eqref{eq:53} give
\begin{displaymath}
  S_x(\rho) + S_\xi(\widehat \rho) \geq -\tau(\rho) (3 + \ln \tau(\rho) + \ln
  \|\rho\|_\infty ) \,.
\end{displaymath}

Then we observe that, except for the term $-3 \tau(\rho)$, this
expression is additive in taking tensor product of unit trace
states. Therefore, applying it to $\displaystyle \otimes^N \rho$ on
$\R^{nN}$, and dividing by $N$ for $N \nearrow +\infty$, gives
\eqref{eq:22} on unit trace states, and the general statement by
homogeneity.

\subsection{Around Corollary~\ref{cor:Fourier-entropies}}
\label{sec:around-corollary}

It is appealing trying to express, or bound, the right side of
\eqref{eq:22} using von Neumann's entropy of $\rho$. Following
\cite{von-Neumann,Wikipedia} this intrisic entropy is defined for unit
trace states by
\begin{displaymath}
  S(\rho) = - \tau(\rho \ln \rho) \,.
\end{displaymath}
For such states, one has $S(\rho) \geq - \ln \|\rho\|_\infty$, with
equality on normalized projections on finite dimensional spaces $\rho
= \Pi_V / \dim V$. Hence on these projections, \eqref{eq:22} reads
\begin{equation}
  \label{eq:54}
  S_x(\rho) + S_\xi(\widehat \rho) \geq  S(\rho) \, (= \ln \dim V ) .
\end{equation}

\smallskip

We don't know whether this holds for general unit trace states. An
interesting family of examples here is given by the heat of the
harmonic oscillator, which is the semigroup
\begin{displaymath}
  \rho_t = \exp(-t (\Delta + \|x\|^2)) \,,
\end{displaymath}
acting on $L^2(\R^n_x)$.  This state is the $n$-th tensor product of
the one-dimensional case. Furthermore it is self-dual in Fourier
transform, i.e. $\widehat \rho_t = \rho_t$. The kernel of $\rho_t$ is
given by Mehler's formula
(see~\cite[Chap.~4.2]{Berline-Getzler-Vergne}):
\begin{displaymath}
  \rho_t(x,y) = (2\pi \sinh 2t)^{-n/2} \exp \bigl( -\frac{1}{2} ( \coth
  2t) (\|x\|^2 + \|y\|^2) + (\sinh 2t)^{-1} \langle x,y \rangle \bigr) \,,
\end{displaymath}
so that its density and trace are
\begin{displaymath}
  \frac{d \nu_{\rho_t}}{dx} = \rho_t(x,x) = (2\pi \sinh 2t)^{-n/2}
  \exp(- (\tanh t) \|x\|^2) \quad \mathrm{and} \quad \tau(\rho_t) =
  (2 \sinh t)^{-n}\,.
\end{displaymath}
This leads easily to the entropies of the normalized states $
\lambda_t = \rho_t / \tau(\rho_t)$, namely
\begin{displaymath}
  S_x(\lambda_t) = \frac{n}{2} - \frac{n}{2} \ln (\frac{\tanh
    t}{\pi}) \quad \mathrm{while}\quad S_\xi (\widehat{ \lambda_t}) =
  \frac{n}{2} - \frac{n}{2} \ln 
  (\frac{\tanh t}{\pi}) - n \ln (2\pi)  \,,
\end{displaymath}
hence
\begin{displaymath}
  S_x (\lambda_t) + S_\xi(\widehat\lambda_t) = n -n \ln 2 - n\ln(\tanh t)\,.
\end{displaymath}
To compute von Neumann's entropy of $\lambda_t$, we recall that on
$\R$, the spectrum of $\rho_t$ is given by $p_k = e^{-(2k+1)t}$, $k
\in\N $. One finds that
\begin{align*}
  S(\lambda_t) & = n S(\lambda_t^\R) = - n \sum_{k\geq 0} (2\sinh
  t)p_k \ln
  ((2\sinh t)p_k) \\
  & = - n \ln (2\sinh t) - 2 n t \sinh t \sum_{k\geq 0} (-2k-1)
  e^{-(2k+1)t}\\
  & = - n \ln (2\sinh t) - 2nt\sinh t \bigl(\frac{1}{2\sinh t}\bigr)'
  \\
  & = nt \coth t - n \ln 2 -n \ln (\sinh t)\,.
\end{align*}
Therefore we obtain
\begin{displaymath}
  S_x (\lambda_t) + S_\xi(\widehat\lambda_t) - S(\lambda_t)  = n(1 + \ln
  (\cosh t) - t \coth t) \,,
\end{displaymath}
which is easily seen to be increasing in $t$ and positive. Hence these
states also satisfy the entropy bound \eqref{eq:54}, sharply when $t$
goes to $0$.



\def\cprime{$'$}

-----------------------------------------------------

\end{document}